\documentclass[12pt]{article}
\usepackage[english]{babel}
\usepackage{graphicx}
\usepackage{amsmath}
\usepackage{amsfonts}
\usepackage{amssymb}
\usepackage{amsthm}
\usepackage{hyperref}
\usepackage[T1]{fontenc}
\usepackage{color}
\usepackage{mathtools}
\usepackage{comment}

\addtolength{\topmargin}{-2cm} \addtolength{\textheight}{3cm}
\addtolength{\oddsidemargin}{-1cm} \addtolength{\textwidth}{2cm}

\input xy
\xyoption{all}

\newtheorem{thm}{Theorem}
\newtheorem{cor}[thm]{Corollary}
\newtheorem{lem}[thm]{Lemma}
\newtheorem{pro}[thm]{Proposition}

\newtheorem*{Mthm}{Main Theorem}

\theoremstyle{definition}
\newtheorem{defi}[thm]{Definition}

\newtheoremstyle{remarque}{}{}{}{}{\it}{.}{\newline}{}
\theoremstyle{remarque}
\newtheorem*{rem}{Remark}


\newcommand{\asd}[5]{%
\setbox1=\hbox{\ensuremath{^{#1}}}%
\setbox2=\hbox{\ensuremath{_{#2}}}%
\setbox5=\hbox{\ensuremath{#5}}%
\hspace{\ifnum\wd1>\wd2\wd1\else\wd2\fi}%
\ensuremath{\copy5^{\hspace{-\wd1}\hspace{-\wd5}#1\hspace{\wd5}#3}%
_{\hspace{-\wd2}\hspace{-\wd5}#2\hspace{\wd5}#4}%
}}

\usepackage[OT2,T1]{fontenc}
\DeclareSymbolFont{cyrletters}{OT2}{wncyr}{m}{n}
\DeclareMathSymbol{\Sha}{\mathalpha}{cyrletters}{"58}
\DeclareMathSymbol{\Brusse}{\mathalpha}{cyrletters}{"42}



\newcommand{\n}{\mathbb{N}}
\newcommand{\z}{\mathbb{Z}}
\newcommand{\q}{\mathbb{Q}}
\newcommand{\C}{\mathbb{C}}

\renewcommand{\int}{\mathrm{Int}\,}
\newcommand{\gm}{\mathbb{G}_{\mathrm{m}}}
\newcommand{\br}{\mathrm{Br}\,}
\newcommand{\nr}{\mathrm{nr}}

\newcommand{\brnr}{{\mathrm{Br}_{\nr}}}

\newcommand{\gal}{\mathrm{Gal}}
\renewcommand{\sl}{\mathrm{SL}}
\newcommand{\sln}{\mathrm{SL}_n}

\newcommand{\pic}{\mathrm{Pic}\,}

\newcommand{\tor}{\mathrm{tor}}
\renewcommand{\ss}{\mathrm{ss}}
\newcommand{\ssu}{\mathrm{ssu}}
\newcommand{\red}{\mathrm{red}}
\newcommand{\torf}{{\rm torf}}
\newcommand{\f}{\mathrm{f}}
\newcommand{\un}{\mathrm{u}}
\newcommand{\spec}{\mathrm{Spec}\,}

\newcommand{\bb}[1]{\mathbb{#1}}

\usepackage{marvosym}

\title{Weak approximation for homogeneous spaces: reduction to the case with finite stabilizer}
\author{Giancarlo Lucchini Arteche\\[5mm]
{\it\small Centre de Math\'ematiques Laurent Schwartz, UMR CNRS 7640}\\
{\it\small \'Ecole polytechnique, 91128 Palaiseau, France}\\
{\small giancarlo.lucchini-arteche@polytechnique.edu }
}

\date{}

\begin{document}

\maketitle

\begin{abstract}
We reduce the question about whether the Brauer-Manin obstruction to weak approximation for homogeneous spaces is the only obstruction to the ``simpler'' question of the particular case of homogeneous spaces of $\sln$ with finite stabilizer. We also give unconditional results based on the few known results on finite stabilizers.\\

\noindent{\bf Keywords:} Homogeneous spaces, weak approximation, Brauer-Manin obstruction.\\
{\bf MSC classes (2010):} 14M17, 14G05.
\end{abstract}

\section{Introduction}
Let $k$ be a number field and let $V$ be a smooth, geometrically integral $k$-variety. Recall that $V$ is said to have \emph{weak approximation} if its $k$-points are dense in the product of its $k_v$-points, where $k_v$ represents the different completions of $k$. The question of whether $V$ has or not weak approximation is, together with the respective question on the Hasse principle, a classic one in arithmetic geometry. Note however that the question on weak approximation is of interest only if $V$ already has a $k$-point, otherwise we would be interested first in the validity of the Hasse principle, which would settle the question of the existence of $k$-points.\\

Let now $G$ be a connected linear algebraic $k$-group. Recall that a ($k$-)homogeneous space of $G$ is a variety $V$ equipped with a $k$-action of $G$ which is transitive at the level of geometric points. In the particular case where $V$ has a $k$-point, then it is easier to say that it is isomorphic to the quotient $G/H$ for some $k$-subgroup $H$ of $G$, which can be taken to be the stabilizer of the given $k$-point for the given action. The question of weak approximation for such homogeneous spaces has been studied since at least half a century. In particular, if we look at homogeneous spaces with trivial stabilizer, we come to the question of whether an algebraic group $G$ itself has weak approximation as a $k$-variety. We know today that the answer to this question is positive for unipotent or semisimple simply connected groups, while it may be false for tori or for other semisimple groups. These results, due to Kneser, Voskresenskii and others, are subsumed in a very clear way in \cite{Sansuc81}, the 
conclusion being that the \emph{Brauer-Manin obstruction} is the only obstruction to weak approximation in this case.\\

If one considers now the same question for homogeneous spaces with non-trivial stabilizers, a lot of work was done after Sansuc's article, ending in Borovoi's result in \cite{Borovoi96} stating that the Brauer-Manin obstruction is the only obstruction to weak approximation if we assume the stabilizer $H$ to be \emph{connected}. However, taking away this hypothesis seems to be a task of a much higher level of difficulty\footnote{In Borovoi's result, $H$ can be taken to be non-connected if it is abelian and if one adds a technical hypothesis on $G$.}. Already the ``simplest'' case of a non-connected stabilizer, that is, the case of \emph{finite} stabilizers, poses a lot of problems and only some very partial results have been found (as for example in \cite{HarariBulletinSMF}, \cite{Demarche} and \cite{GLA AF PH res}).

The goal of this article is to prove that this ``simple'' case actually accounts for the whole question. This makes sense since it is well known that every algebraic group is an extension of a finite group by a connected group. But note however that, as Colliot-Th\'el\`ene has noticed, if the Brauer-Manin obstruction happens to be the only obstruction to weak approximation for homogeneous spaces with finite stabilizer, then an easy corollary is the positive answer to the inverse Galois problem\footnote{This goes all the way back to a text by Ekedahl (cf. \cite{Ekedahl}) involving varieties verifying weak or strong approximation. Colliot-Th\'el\`ene noticed then that ``weak weak'' approximation would suffice to do the same arguments, and this last one is implied by Brauer-Manin.} (!), hence the quotation marks on ``simple''.\\

In this article then, we stick together an eventual result on finite stabilizers with Borovoi's result on connected stabilizers, much in the same way than Borovoi did when he put together in \cite{Borovoi96} his results on semisimple groups (cf. \cite{Borovoi93}) with the work done before him on tori and unipotent groups. More precisely, we prove:

\begin{Mthm}[Consequence of Theorem \ref{MainThm}]
Assume that the Brauer-Manin obstruction is the only obstruction to weak approximation for homogeneous spaces of the form $\sln/H$ with finite $H$. Then this is also the case for every homogeneous space $G/H$ with connected $G$ and arbitrary $H$.
\end{Mthm}

The actual theorem states that we may associate a precise finite $k$-group embedded into $\sln$ to each given homogeneous space $G/H$.\\

Concerning finite stabilizers, the case where $G=\sln$ is basically the only one that has been studied until now, certainly because one could intuit that everything would come down to this case, and it is in this context that links with Galois cohomology and the inverse Galois problem appear (as a matter of fact, inverse Galois is a consequence of an even weaker type of approximation property for these varieties, cf. \cite[\S4]{HarariBulletinSMF}). Moreover, by classic arguments that will reappear in the proof of the main theorem (cf. Lemma \ref{lemme stable bir} below), we know that all homogeneous spaces of the form $\sln/H$ for fixed $H$ are $k$-stably birational to each other and, by the No-name Lemma (cf. \cite[Cor. 3.9]{ColliotSansucChili}), we know that they are at the same time $k$-stably birational to the quotient $\mathbb{A}_k^n/H$ for any faithfull linear $k$-action of $H$ on affine space. This links the study of homogeneous spaces with Noether's problem, which asks about the $k$-rationality of such quotients. The failure of weak approximation is in this sense a proof of non-$k$-rationality.\\

The structure of the text is the following. In section \ref{preliminaires} we fix notations and we recall some properties of the Brauer-Manin obstruction. We also give some results on algebraic groups we will be using. In section \ref{section Ono} we revisit Ono's Lemma (a useful tool for producing quasi-split tori, cf. \cite[Thm 1.5.1]{Ono}) in the context of algebraic tori equipped with a $k$-action of a smooth finite $k$-group. We then go straight to the proof of the Main Theorem in section \ref{Proof}. The main idea is to use the same trick Borovoi used in \cite{Borovoi96} to eliminate the semisimple and the unipotent parts of the stabilizer, which will reduce us to the case where the stabilizer is an extension of a finite group by a torus, then apply our new version of Ono's Lemma in order to reduce us to the case where this torus is nice enough and finally apply the existence result established in \cite[Prop. 1]{GLABrnral2} in order to create a homogeneous space with finite stabilizer which will be $k$-stably birational to the one we have reduced to. We finally give in section \ref{section resultat inconditionnel} an unconditional result by putting together our Main Theorem with a result of Harari on finite stabilizers.

\paragraph*{Acknowledgements}
This work was partially supported by the FMJH through the grant N\textsuperscript{o} ANR-10-CAMP-0151-02 in the ``Programme des Investissements d'Avenir''.

\section{Notations and preliminaries}\label{preliminaires}
In this section, as well as in sections \ref{Proof} and \ref{section resultat inconditionnel}, $k$ denotes a number field with algebraic closure $\bar k$ and absolute Galois group $\Gamma_k$. The set of places of $k$ is denoted by $\Omega$ and $k_v$ denotes the completion of $k$ with respect to $v\in\Omega$. We use the letters $U,V,W$ for $k$-varieties. These will always be assumed to be smooth and geometrically integral since this is the case for homogeneous spaces. We denote by $V(k_\Omega)$ the product $\prod_\Omega V(k_v)$.

By Brauer group of $V$ we mean its cohomological definition $\br V:=H^2_{\text{\'et}}(V,\gm)$. The unramified Brauer group $\brnr V$ is defined as the Brauer group $\br X$ of a smooth compactification $X$ of $V$, that is, a proper smooth $k$-variety equipped with an open immersion $V\hookrightarrow X$ ($X$ always exists thanks to Nagata's and Hironaka's Theorems). For a birationally-invariant definition of this group and the proof of its equivalence with the one given here, we send the reader to \cite{ColliotSantaBarbara}.

\subsection{The Brauer-Manin obstruction}\label{section BM}
For the definition of the Brauer-Manin obstruction, we send the reader to \cite[\S5.2]{Skor}. We simply recall that there is a pairing
\begin{align*}
V(k_\Omega)\times\brnr V &\to \q/\z\\
((x_v),\alpha)&\mapsto \sum_{v\in\Omega}\mathrm{inv}_v(\alpha(x_v)),
\end{align*}
and that the set $V(k_\Omega)^\br$ is defined as the left kernel of this pairing, that is, the subset of $V(k_\Omega)$ of those elements which are orthogonal to the whole group $\brnr V$ for the paring above. By global class field theory, we have the inclusion $V(k)\subset V(k_\Omega)^\br$. Moreover, this last subset is closed in $V(k_\Omega)$, hence the closure $\overline{V(k)}$ of $V(k)$ in $V(k_\Omega)$ is also contained in $V(k_\Omega)^\br$.

\begin{defi}
We say that the Brauer-Manin obstruction to weak approximation is the only obstruction for $V$ if we have the equality $\overline{V(k)}=V(k_\Omega)^\br$. We abbreviate this by saying that $V$ ``satisfies BM-WA''.
\end{defi}

Note that the BM-WA property is invariant by $k$-stable birationality as soon as the quotient $\brnr V/\br k$ is finite. That is, if $V$ satisfies BM-WA and is $k$-stably birational to $W$, then $W$ also satisfies BM-WA. Recall that $V$ and $W$ are $k$-stably birational if there exists $r,s\in \n$ such that $V\times_k\bb{A}_k^r$ is $k$-birational to $W\times_k\bb{A}_k^s$. This invariance follows from \cite[Prop. 6.1]{ColliotPalSkor} ($k$-birational invariance of BM-WA), \cite[Thm. 4.1.5]{ColliotSantaBarbara} (homotopy invariance of $\brnr V$) and from the well-known fact that affine space has weak approximation.\\

Note that the so-called Brauer set $V(k_\Omega)^\br$ has the following properties:

\begin{pro}\label{prop BM}
Let $f:W\to V$ be a $k$-morphism of (smooth, geometrically integral) varieties. Then,
\begin{itemize}
 \item[(i)] the natural morphism $f_*:W(k_\Omega)\to V(k_\Omega)$ sends $W(k_\Omega)^\br$ to $V(k_\Omega)^\br$;
 \item[(ii)] if moreover $f$ admits a $k$-section $g:V\to W$, then for every $(x_v)\in V(k_\Omega)^\br$ its preimage $(y_v):=g_*((x_v))\in W(k_\Omega)$ belongs to $W(k_\Omega)^\br$.
\end{itemize}
\end{pro}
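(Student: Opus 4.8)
The plan is to deduce both statements from the functoriality of the unramified Brauer group together with the functoriality of evaluation of Brauer classes at local points. The key input I would isolate first is this: any $k$-morphism $h\colon A\to B$ of smooth geometrically integral $k$-varieties induces a pullback $h^*\colon\brnr B\to\brnr A$ that is compatible with evaluation, in the sense that $(h^*\beta)(a)=\beta(h(a))$ in $\br k_v$ for every $v\in\Omega$, every $a\in A(k_v)$ and every $\beta\in\brnr B$. When $h$ is dominant this is the usual functoriality of the unramified Brauer group of a function field; in general one argues via compactifications, which is necessary here since the section $g$ in (ii) is a closed immersion, hence typically far from dominant. Choosing smooth compactifications $A\subseteq X$ and $B\subseteq Y$, the composite $A\xrightarrow{h}B\hookrightarrow Y$ extends to a rational map $X\dashrightarrow Y$ that is defined on all of $A$; by Hironaka one resolves its indeterminacy by a modification $X'\to X$ which is an isomorphism above $A$, producing a morphism $\bar h\colon X'\to Y$. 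Since $X'$ is again a smooth compactification of $A$, the cohomological pullback $\bar h^*\colon\br Y\to\br X'$ is exactly the desired map $\brnr B\to\brnr A$, and it is compatible with evaluation because $\bar h$ extends $h$ and evaluation at a point is itself a pullback. (Alternatively one simply cites the functoriality of $\brnr$ as a functor on smooth $k$-varieties from the references already invoked for its homotopy and birational invariance.)

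Granting this, part (i) is immediate. Let $(y_v)\in W(k_\Omega)^\br$ and let $\alpha\in\brnr V$ be arbitrary. Applying the compatibility above to $f$,
\[
\sum_{v\in\Omega}\mathrm{inv}_v\big(\alpha(f(y_v))\big)=\sum_{v\in\Omega}\mathrm{inv}_v\big((f^*\alpha)(y_v)\big)=0,
\]
the last equality holding because $f^*\alpha\in\brnr W$ and $(y_v)$ lies in the left kernel of the Brauer--Manin pairing on $W$. As $\alpha$ was arbitrary, $f_*((y_v))=(f(y_v))$ lies in $V(k_\Omega)^\br$.

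For part (ii) I would simply apply (i) to the $k$-morphism $g\colon V\to W$: since $(x_v)\in V(k_\Omega)^\br$, its image $g_*((x_v))=(y_v)$ automatically lies in $W(k_\Omega)^\br$. The hypothesis that $f$ admits the section $g$ plays no role in this membership; it serves only to guarantee that $(y_v)$ is genuinely a preimage of $(x_v)$, i.e. $f_*((y_v))=(f(g(x_v)))=(x_v)$ since $f\circ g=\id_V$.

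The only real obstacle is the functoriality statement isolated in the first step, and specifically making sense of $h^*$ on unramified Brauer groups when $h$ is not dominant, which is exactly the situation of the section in (ii); this is handled by the resolution-of-indeterminacy argument above (or by citation). Everything else is a formal unwinding of the definition of the Brauer set.
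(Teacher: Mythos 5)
Your argument is correct and follows essentially the same route as the paper: functoriality of the Brauer group compatible with evaluation at local points gives (i), and (ii) is obtained by applying (i) to the section $g$. The extra care you take to justify that $f^*$ carries $\brnr V$ into $\brnr W$ for a not-necessarily-dominant morphism (via compactifications and resolution of indeterminacy) is a welcome filling-in of a point the paper leaves to the standard references, not a different approach.
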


\begin{proof}
This is an easy consequence of the functoriality of the Brauer group. Indeed, for every $(x_v)\in W(k_\Omega)$ with image $(y_v)\in V(k_\Omega)$ and $\alpha\in\brnr V$ we have
\[\alpha(y_v)=f^*(\alpha)(x_v)\in\br k_v,\]
simply because $y_v$ corresponds by definition, for every $v\in\Omega$, to the composition
\[\spec k_v\xrightarrow{x_v} W\xrightarrow{f} V.\]
This gives immediately assertion (i). Assertion (ii) follows from (i) applied to $g$.
\end{proof}

\subsection{Homogeneous spaces}\label{section HS}
Let us talk now about homogeneous spaces. For $G$ a linear algebraic $k$-group, we denote
\begin{itemize}
\item $D(G)$ the derived subgroup of $G$;
\item $G^\circ$ the neutral connected compontent of $G$;
\item $G^\f=G/G^\circ$ the group of connected components of $G$ (it is a finite group);
\item $G^\un$ the unipotent radical of $G^\circ$ (it is a unipotent characteristic subgroup);
\item $G^\red=G^\circ/G^\un$ (it is a reductive group);
\item $G^\ss=D(G^\red)$ (it is a semisimple group);
\item $G^\tor=G^\red/G^\ss$ (it is a torus);
\item $G^\ssu=\ker[G^\red\twoheadrightarrow G^\tor]$ (it is an extension of $G^\ss$ by $G^u$);
\item $G^\torf=G/G^\ssu$ (it is an extension of $G^\f$ by $G^\tor$);
\end{itemize}
all of which are defined over $k$. For $T$ a $k$-torus, we shall note $T[m]$ its subgroup of $m$-torsion elements. Recall that $T$ is said to be \emph{quasi-split} if its geometric characters form an induced $\gal(\bar k/k)$-module.\\

As it was recalled in the introduction, we are only concerned here with homogeneous spaces of the form $V=G/H$ with $G$ a connected linear $k$-group and $H$ an arbitrary $k$-subgroup. In any case, recall that when the base field is of characteristic 0, one may define a homogeneous space of $G$ as a variety $V$ equipped with a $k$-action (i.e. a $k$-morphism $a:G\times_k V\to V$ having a natural compatibility with mutliplication in $G$) such that the induced action of $G(\bar k)$ over $V(\bar k)$ is transitive.\\

The proof of the Main Theorem uses some particular key results on algebraic groups which we recall here. First of all, there is the following result, consequence of \cite[Prop. 1]{GLABrnral2}, useful for producing finite stabilizers.

\begin{pro}\label{proposition existence}
Let $K$ be a field of characteristic 0. Let $G$ be a linear algebraic $K$-group such that $G=G^\torf$. Then there exists a finite $K$-subgroup $F$ of $G$ and a commutative diagram with exact lines
\[\xymatrix{
1 \ar[r] & G^\tor[m] \ar[r] \ar@{^{(}->}[d] & F \ar[r] \ar@{^{(}->}[d] & G^\f \ar[r] \ar@{=}[d] & 1 \\
1 \ar[r] & G^\tor \ar[r] & G \ar[r] & G^\f \ar[r] & 1,
}\]
where $m=nd$, $n$ is the order of $G^\f$ and $d$ the order of an extension $K'/K$ splitting $G^\tor$.
\end{pro}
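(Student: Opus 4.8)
The plan is to reduce to the cited result \cite[Prop.~1]{GLABrnral2} by choosing an auxiliary subgroup of $G^\tor$ on which the extension "splits up to torsion". The key observation is that the extension $1\to G^\tor\to G\to G^\f\to 1$ is classified by a class in $H^2(G^\f, G^\tor(\bar K))$ (a group-cohomology $\mathrm{Ext}$-type class, since $G^\f$ is finite and $G^\tor$ is commutative), and that, since $G^\f$ has order $n$, this class is killed by $n$; multiplication by $n$ on $G^\tor$ annihilates it. More precisely, I would first base-change to $K'$, a finite extension of degree $d$ splitting $G^\tor$, so that $G^\tor_{K'}\cong\gm^r$ is a split torus; then the $n$-torsion subgroup $G^\tor[m]$ with $m=nd$ is large enough to capture the obstruction.

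First I would set up the extension class. Over $\bar K$ the sequence $1\to G^\tor(\bar K)\to G(\bar K)\to G^\f\to 1$ gives a class $e\in H^2(G^\f, G^\tor(\bar K))$ in the sense of extensions of the finite (abstract, after passing to $\bar K$-points, but $\Gamma_K$-equivariantly) group $G^\f$ by the module $G^\tor(\bar K)$; one must be slightly careful and work at the level of $k$-group extensions, i.e. with the appropriate non-abelian or Galois-equivariant $H^2$, but the formalism is standard in characteristic $0$. Since $|G^\f|=n$, a transfer/restriction–corestriction argument shows $n\cdot e=0$. Therefore the pushforward of $e$ along the multiplication map $[n]\colon G^\tor\to G^\tor$ vanishes, which means that the pushout extension $1\to G^\tor\to G''\to G^\f\to 1$ (along $[n]$) splits. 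Unwinding this: inside $G$, the preimage of a set-theoretic splitting, corrected by elements of $G^\tor$, lands after the division by $n$ inside the $n$-torsion — and after accounting for the splitting field $K'$ of $G^\tor$, inside $G^\tor[nd]=G^\tor[m]$. This is the step I expect to be the crux: making the "divide the cocycle by $n$, land in the torsion" argument rigorous at the level of algebraic $k$-groups rather than merely abstract groups, and pinning down why the extra factor $d$ is needed (it comes from the fact that the reduction of the cocycle is only defined over $K'$, and one needs $d$-divisibility to descend, or equivalently one enlarges the torsion level by $d$ to absorb the Galois twisting).

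Granting this, the finite group $F$ is defined as the preimage in $G$ of a lift of a splitting of $G^\f$ into $G''=G/\text{(image of }G^\tor[m]^\perp)$... — more cleanly: apply \cite[Prop.~1]{GLABrnral2} directly, which is presumably stated exactly to produce such an $F$ with the commutative diagram, once one knows $G=G^\torf$; the content of the present proposition is just to record the explicit bound $m=nd$ on the torsion level. So the proof reduces to: (1) invoke \cite[Prop.~1]{GLABrnral2} to get \emph{some} finite $F$ fitting in the lower row with $F\cap G^\tor$ finite; (2) argue that $F\cap G^\tor$ is automatically contained in $G^\tor[m]$ because $F/(F\cap G^\tor)\cong G^\f$ has order $n$ forces every element of $F\cap G^\tor$ to have order dividing $n$ times something controlled by the splitting field, giving the divisibility by $nd$; (3) replace $F$ by the (possibly smaller) subgroup generated together with $G^\tor[m]$ if necessary, or simply note $F\subseteq$ the preimage of $G^\f$ meeting $G^\tor$ in $G^\tor[m]$, yielding the diagram as stated. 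The main obstacle, as noted, is the cohomological bookkeeping showing the obstruction dies after multiplying by $m=nd$; everything else is formal diagram-chasing and citation of \cite{GLABrnral2}.
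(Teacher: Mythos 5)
The paper itself gives no proof of this proposition: it is stated as a direct consequence of \cite[Prop.~1]{GLABrnral2}, so the only ``in-paper'' argument is that citation, and your final move of deferring to that reference is exactly what the author does. Your cohomological sketch is also the right picture of what lies behind such a statement: the extension class in $H^2(G^\f,G^\tor(\bar K))$ is killed by $n=|G^\f|$, divisibility of $G^\tor(\bar K)$ lets one modify a defining cocycle so that it takes values in torsion, and the degree-$d$ splitting field of $G^\tor$ is what allows the modification to be made Galois-equivariantly, whence $m=nd$; but as you yourself admit, that equivariance step is precisely the crux, so your text is a citation plus a heuristic rather than an independent proof. One concrete flaw: your fallback step (2) is not a valid argument. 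If the cited result only produced \emph{some} finite $K$-subgroup $F_0$ surjecting onto $G^\f$ with $F_0\cap G^\tor$ finite, nothing would force $F_0\cap G^\tor\subset G^\tor[m]$: for $G=G^\tor\times G^\f$ one can take $F_0=\mu\times G^\f$ with $\mu\subset G^\tor$ an arbitrary finite subgroup, whose exponent has nothing to do with $n$ or $d$. The bound $m=nd$ has to come out of the construction (equivalently, out of the statement of the cited proposition), not from an a posteriori count based on $|G^\f|=n$ and the splitting field. The only legitimate a posteriori adjustment is the one you mention in passing: once a finite subgroup meeting $G^\tor$ \emph{inside} $G^\tor[m]$ is constructed, one may enlarge it by the subgroup $G^\tor[m]$ (which is normal in $G$, being characteristic in the normal subgroup $G^\tor$) so that the kernel of $F\to G^\f$ is the full group $G^\tor[m]$, as the diagram in the statement requires.
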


Next, let us recall two lemmas used by Borovoi in \cite{Borovoi96}. They allow him to treat separately the case where the stabilizer is of multiplicative type and the case where it only has a semisimple and a unipotent part (cf. \cite[Lem. 3.1, Lem. 5.1]{Borovoi96}).

\begin{lem}\label{lemme fibration}
Let $K$ be any field. Let $V$ be a homogeneous space of an affine $K$-group $G$, and $N$ a normal $K$-subgroup of $G$. Then there exists a quotient $W = V/N$, i.e. a homogeneous space $W$ of $G/N$ and a smooth $G$-equivariant map $\varphi: V \to W$ such that $\varphi$ is a quotient map. In particular, $\varphi$ is surjective and its geometric fibers are orbits of $N$.
\end{lem}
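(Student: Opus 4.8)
The plan is to realize $W$ as the quotient $G/(N\cdot H)$, where $H$ is the stabilizer of a geometric point of $V$ (after possibly extending scalars to handle the case where $V$ has no $K$-point, then descending). More precisely, fix a separable closure $\bar K$ and a point $\bar v\in V(\bar K)$ with geometric stabilizer $\bar H=G_{\bar v}$; since $N$ is normal in $G$, the product $N\cdot H$ is a (closed) $\bar K$-subgroup of $G_{\bar K}$, and I would check that it is in fact defined over $K$ — this follows because $N$ is defined over $K$ and $\bar H$, although perhaps not defined over $K$ itself, has the property that its $G$-conjugacy class is Galois-stable (all geometric stabilizers for the transitive action are conjugate), so $N\cdot\bar H$ is the preimage under $G\to G/N$ of a single $G/N$-conjugacy class, hence its image is a Galois-stable subgroup scheme of $G/N$, and one pulls back. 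Alternatively, and more cleanly, I would first reduce to the case $V=G/H$ with $H$ a genuine $K$-subgroup: this reduction is standard in characteristic zero (the stabilizer of a point is smooth, and one uses that $V$ always has a point over a finite extension, together with the fact that the statement can be checked after a finite separable base change by Galois descent for the morphism $\varphi$).

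Granting $V=G/H$, I would set $W:=G/(NH)$, which exists as a smooth quasi-projective $K$-variety because $NH$ is a closed $K$-subgroup of the affine $K$-group $G$ (quotients of affine algebraic groups by closed subgroups exist and are quasi-projective, and smoothness in characteristic zero is automatic). The quotient map $G\to G/H$ factors through $G/(NH)$, giving a $G$-equivariant morphism $\varphi\colon V=G/H\to G/(NH)=W$, and $W$ is visibly a homogeneous space of $G$ on which $N$ acts through $G/N$, since $N\subseteq NH$ acts trivially on $G/(NH)$; thus $W$ is a homogeneous space of $G/N$. That $\varphi$ is a quotient map by $N$ — i.e. identifies $W$ with $V/N$ in the categorical sense — follows from the transitivity of the double-coset description: the geometric fibers of $\varphi$ over $\bar K$ are exactly the cosets $g\cdot(NH)/H$, which are precisely the $N$-orbits on $V(\bar K)$, so $\varphi$ is surjective with fibers the $N$-orbits, and faithfully flat descent gives the universal property. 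Flatness of $\varphi$ follows from $G$-equivariance together with homogeneity of source and target (a $G$-equivariant map of homogeneous spaces is automatically flat, indeed a torsor-type fibration étale-locally), and since everything is smooth in characteristic zero, $\varphi$ is even smooth.

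The main obstacle I anticipate is purely the descent/rationality bookkeeping: verifying that $NH$ really descends to a $K$-subgroup when one only starts from a homogeneous space $V$ without a rational point, and checking that the quotient $W$ so constructed does not depend on the auxiliary choices (the point $\bar v$, the extension of scalars) up to canonical $K$-isomorphism, so that it deserves the name $V/N$. In characteristic zero all the group-scheme-theoretic subtleties (smoothness, reducedness of stabilizers, existence of quotients) evaporate, so this is really the only place where care is needed; I would handle it by the conjugacy argument above, or by invoking the general theory of quotients of homogeneous spaces, noting that $N$ normal guarantees the $N$-orbit equivalence relation on $V$ is the one induced by a subgroup and hence has a good quotient in the category of varieties. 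The remaining assertions — surjectivity of $\varphi$, geometric fibers being $N$-orbits, smoothness — then read off immediately from the coset picture over $\bar K$.
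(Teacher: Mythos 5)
There is a genuine gap, and it sits exactly at the one point you yourself flag as the ``main obstacle''. (For context: the paper does not prove this lemma at all --- it quotes it from Borovoi \cite[Lem.\ 3.1]{Borovoi96} --- so your proposal must stand on its own, and it is Borovoi's descent argument that it would have to reproduce.) Your two suggested ways around the rationality issue both fail. First, $N\cdot\bar H$ is in general \emph{not} defined over $K$: for $\sigma\in\Gamma_K$ one has $\sigma(\bar H)=g\bar Hg^{-1}$ for some $g\in G(\bar K)$, hence $\sigma(N\bar H)=g(N\bar H)g^{-1}$; Galois stability of the \emph{conjugacy class} of $N\bar H$ (or of its image $\bar HN/N$ in $G/N$) does not give Galois stability of the subgroup itself, so there is nothing to ``pull back''. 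Second, the ``cleaner'' reduction to $V=G/H$ with $H$ a $K$-subgroup is not available: a homogeneous space need have no $K$-point, and an existence statement cannot simply be ``checked after a finite separable base change'' --- descending the solution \emph{is} the content of the lemma. The case $N=\{1\}$ makes the failure concrete: there the lemma forces $W\cong V$, whereas your construction would produce $W=G/H$, which has a $K$-point even when $V(K)=\emptyset$. More generally, defining $W:=G/(NH)$ over $K$ builds in a $K$-point on $W$, which is too strong: the correct $W$ is only a $K$-\emph{form} of $G_{\bar K}/(N\bar H)$.

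The way to close the gap (and this is the standard argument behind Borovoi's lemma) is to construct the quotient over $\bar K$ (or over a finite Galois extension $L/K$ where a point of $V$ and its stabilizer live), namely $\bar W=G_{\bar K}/(N\bar H)$ with the orbit map $\bar\varphi\colon V_{\bar K}\to\bar W$, and then descend the \emph{pair} $(\bar W,\bar\varphi)$ rather than the subgroup $N\bar H$: since $N$ is a normal $K$-subgroup, the $N$-orbit equivalence relation on $V_{\bar K}$ is stable under the semilinear Galois action coming from the $K$-structure of $V$, so each $\sigma$ induces a semilinear automorphism of $\bar W$ compatible with $\bar\varphi$; uniqueness of the induced maps gives the cocycle condition, and effectivity of Galois descent for quasi-projective varieties (quotients of affine groups by closed subgroups are quasi-projective, as you note) produces $W$ and $\varphi$ over $K$, with the $G/N$-action descending for the same reason. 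Your remaining points --- $G$-equivariance, surjectivity, fibers being $N$-orbits, and smoothness/flatness of $\varphi$ in characteristic $0$ --- are fine once this descent step is done correctly, but as written the proposal verifies the wrong statement at the crucial step.
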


\begin{lem}\label{lemme reduction sc}
Let $G$ be a connected $k$-group. Then there exists an extension
\[1 \to T \to G' \to G \to 1,\]
such that $T$ is a torus and $(G')^\ssu$ is simply connected.
\end{lem}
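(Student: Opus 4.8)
The plan is to first remove the unipotent radical by a base-change argument, reducing to the case where $G$ is reductive, and then to invoke the classical existence of a \emph{$z$-extension}.

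\emph{Reduction to the reductive case.} Let $\pi\colon G\twoheadrightarrow G^\red=G/G^\un$ be the quotient map. Assume we have already produced an exact sequence $1\to T\to R\to G^\red\to 1$ with $T$ a torus, $R$ connected reductive, and $D(R)=R^\ss$ simply connected. I would then take $G':=G\times_{G^\red}R$, the fibre product of $\pi$ and $R\to G^\red$. Since $R\to G^\red$ is smooth and surjective with kernel $T$, its base change $G'\to G$ is smooth and surjective with the same kernel $T$; thus $1\to T\to G'\to G\to 1$ is an extension of the required shape, and $G'$ is smooth and connected (hence geometrically integral, the identity being a $k$-point). The other projection $G'\to R$ is surjective with kernel $\{(u,1):u\in G^\un\}\cong G^\un$, which is normal and unipotent; as $R$ is reductive this forces $(G')^\un=G^\un$ and $(G')^\red=R$. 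Consequently $(G')^\ss=D(R)$ is simply connected, and $(G')^\ssu$ — an extension of the simply connected semisimple group $D(R)$ by the unipotent group $G^\un$ — is simply connected in the relevant sense.

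\emph{The reductive case.} It remains to construct, for a connected reductive $k$-group $H$, an extension $1\to T\to H'\to H\to 1$ with $T$ a (quasi-trivial) torus and $D(H')$ simply connected. I would recall the standard construction. Let $\rho\colon H^{\mathrm{sc}}\to D(H)$ be the universal cover, a central isogeny whose kernel $\mu$ is a finite $k$-group of multiplicative type, and let $\mathrm{rad}(H)=Z(H)^\circ$ be the radical torus, so that $(x,r)\mapsto\rho(x)r$ gives a surjection $H^{\mathrm{sc}}\times\mathrm{rad}(H)\twoheadrightarrow H$ with finite central kernel $\nu$ of multiplicative type. One embeds $\nu$ into a quasi-trivial $k$-torus $P$ (possible: the character module $X^*(\nu)$, a finite $\Gamma_k$-module, is a quotient of a permutation $\Gamma_k$-module $X^*(P)$, and dualizing gives a closed immersion $\nu\hookrightarrow P$), and sets $H':=\bigl(H^{\mathrm{sc}}\times\mathrm{rad}(H)\times P\bigr)/\nu$, where $\nu$ is embedded diagonally via its two inclusions. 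Projection to the first two factors yields $H'\twoheadrightarrow H$ with kernel isomorphic to $P$; and since $H^{\mathrm{sc}}$ meets $\nu$ trivially inside $H'$ while $H'/H^{\mathrm{sc}}$ is a torus, one gets $D(H')=H^{\mathrm{sc}}$, which is simply connected.

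\emph{Expected main difficulty.} No step is genuinely deep: the proof is an assembly of standard facts about linear algebraic groups in characteristic $0$ — handling the unipotent radical by the fibre product above, functoriality of the universal cover, embedding a finite $k$-group of multiplicative type into a quasi-trivial torus, and the behaviour of $D(\cdot)$, $(\cdot)^\red$, $(\cdot)^\un$, $(\cdot)^\ssu$ under central torus extensions. If one point deserves care, it is the well-definedness and $k$-rationality of the pushout in the reductive case: checking that $\mu$ and $\nu$ are indeed defined over $k$ and central, and that $\nu$ embeds into a quasi-trivial $k$-torus. All of this is routine but should be made explicit.
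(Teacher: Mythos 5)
Your proof is correct and follows essentially the same route as the source the paper relies on: the paper does not prove this lemma itself but quotes it from Borovoi (\cite[Lem.\ 5.1]{Borovoi96}), whose argument is exactly the standard $z$-extension construction you give (embed the kernel of $H^{\mathrm{sc}}\times\mathrm{rad}(H)\to H$ into a quasi-trivial torus, then pull back along $G\to G^\red$ to handle the unipotent radical). Your verifications that $(G')^\un\cong G^\un$, $(G')^\red\cong R$ and $D(H')\cong H^{\mathrm{sc}}$ are the right points to check, and they are carried out correctly.
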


We will use the first lemma in the same fashion: we will separate the unipotent and semisimple parts (i.e. $H^\ssu$) from the toric and finite parts (i.e. $H^\torf$), which will be dealt with using Proposition \ref{proposition existence} and the results of section \ref{section Ono}. The ``ssu'' part is dealt with using the second lemma and the fact (cf. \cite[Pro. 3.4]{Borovoi96}) that homogeneous spaces with simply connected ambient space and stabilizer of ``ssu'' type have weak approximation.\\

The argument using Lemma \ref{lemme fibration} appears however more than once during the proof of the Main Theorem. We finish then this section by giving a precise statement in order to avoid repetitions later.\footnote{I warmly thank the anonymous referee for suggesting this.}

\begin{lem}\label{lemme stable bir}
Let $K$ be any field, let $G$ be a connected $K$-group and let $H$ be a $K$-subgroup of $G$. For $i=1,2$, let $G_i$ be a $K$-rational special reductive group and let $\iota_i:H\to G_i$ be a $K$-group morphism inducing a diagonal embedding $H\to G\times G_i$. Then the homogeneous spaces $(G\times G_1)/H$ and $(G\times G_2)/H$ are $K$-stably birational.
\end{lem}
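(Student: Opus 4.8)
The plan is to realize both homogeneous spaces, up to stable birationality, as quotients of the \emph{same} total space by the same group, using the two key properties of a $K$-rational special reductive group $G_i$: first, that $H^1(L, G_i) = 1$ for every field extension $L/K$ (speciality), so that every $G_i$-torsor is locally trivial in a strong sense; and second, that $G_i$ is $K$-rational as a variety, hence $K$-stably birational to affine space. Concretely, consider the diagonal embedding $H \hookrightarrow G \times G_1 \times G_2$ (via $\iota_1$ and $\iota_2$ simultaneously) and form the homogeneous space $V := (G \times G_1 \times G_2)/H$. I will show that $V$ is $K$-stably birational to each of $(G\times G_1)/H$ and $(G\times G_2)/H$; by symmetry it suffices to do this for $(G\times G_1)/H$.

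First I would apply Lemma \ref{lemme fibration} with the ambient group $G \times G_1 \times G_2$ and the normal subgroup $N = 1 \times 1 \times G_2$ (normal because $G_2$ is a direct factor and $H$ acts on it only through $\iota_2$, but the subgroup $1\times 1\times G_2$ is literally a normal subgroup of the product). This yields a smooth $(G\times G_1\times G_2)$-equivariant quotient map $\varphi: V \to W$ where $W = (G\times G_1)/H$, whose geometric fibers are orbits of $N = G_2$; since the stabilizer of a point of $V$ is $H$ and $H \cap N = 1$, these fibers are torsors under $G_2$. So $\varphi: V \to W$ is a torsor under the special group $G_2$ over $W$. Because $G_2$ is special, this torsor is Zariski-locally trivial over $W$: there is a dense open $U \subseteq W$ over which $\varphi$ restricts to $\varphi^{-1}(U) \cong U \times_K G_2$. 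Then $V$ is $K$-birational to $W \times_K G_2$, and since $G_2$ is $K$-rational, $W \times_K G_2$ is $K$-stably birational to $W = (G\times G_1)/H$. Hence $V$ is $K$-stably birational to $(G\times G_1)/H$, and symmetrically to $(G\times G_2)/H$, which gives the claim by transitivity of stable birationality.

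The main point to get right is the assertion that the special group $G_2$ makes the torsor $\varphi$ Zariski-locally trivial over the base, which is the standard characterization of special groups (Serre): a smooth connected linear algebraic group is special if and only if every torsor under it over a scheme is Zariski-locally trivial; over a smooth $K$-variety it then admits a rational section, so the torsor is birationally a product. One should also check the mild technical point that the diagonal embedding $H \to G\times G_1\times G_2$ is indeed a closed immersion with $H\cap(1\times 1\times G_2)=1$ inside the product, so that Lemma \ref{lemme fibration} applies and identifies the fibers as $G_2$-torsors rather than quotients thereof; this is immediate because the projection $G\times G_1\times G_2 \to G\times G_1$ restricts on the diagonal copy of $H$ to the diagonal embedding $H\to G\times G_1$ induced by $\iota_1$, which is injective. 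Everything else is routine: $K$-rationality of $G_i$ gives the stable-birational absorption of the $G_i$-factor, and transitivity of $K$-stable birationality closes the argument.
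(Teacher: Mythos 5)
Your proposal is correct and follows essentially the same route as the paper: pass to $(G\times G_1\times G_2)/H$, apply Lemma \ref{lemme fibration} to the special factor viewed as a normal subgroup so that the quotient map is a torsor under it, and use speciality plus $K$-rationality to absorb that factor up to stable birationality. The only cosmetic difference is that you invoke Zariski-local triviality of the torsor over the base, whereas the paper trivializes the generic fiber over the function field of the base --- two equivalent consequences of $G_i$ being special.
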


\begin{proof}
It will suffice to prove that both homogeneous spaces are $K$-stably birational to $(G\times G_1\times G_2)/H$. By symmetry, we are then reduced to the case where $G_2$ is trivial (replace $G$ by $G\times G_2$ here above).

Put then $G_0:=G\times G_1$, $V=G_0/H$ and $W=G/H$. Applying Lemma \ref{lemme fibration} to $G_0$ and its normal subgroup $G_1$, we get a surjective map $V\to W$ whose fibers are orbits of $G_1$ which are easily seen to be torsors under this group. Since $G_1$ is special, the fiber over any point $x\in V$ is then isomorphic to $G_1$ over the residue field of $x$. This is the case in particular for the generic fiber and hence, since $G_1$ is $K$-rational, we get that $W$ is rational over $K(V)$, the function field of $V$. In other words, $W$ is $K$-stably birational to $V$.
\end{proof}

\section{Ono's Lemma revisited}\label{section Ono}
We give here a new interpretation of Ono's Lemma (cf. \cite[Thm. 1.5.1]{Ono}). This result allows us to produce quasi-split tori, which in turn are useful for proving $K$-stable birationality between homogeneous spaces.\footnote{I warmly thank Jean-Louis Colliot-Th\'el\`ene for bringing this up in an old conversation.}

Let us first recall the original statement. A morphism of tori $T_0\to T_1$ is said to be an \emph{isogeny} if it is surjective and with finite kernel. Then Ono's Lemma states:

\begin{lem}\label{lemme Ono}
Let $K$ be any field. Let $T$ be a $K$-torus and $K'/K$ a Galois extension splitting $T$. Then there exists an integer $r\geq 1$ and a quasi-split torus $T_0$, split by $K'/K$, such that $T^r\times T_0$ is $K$-isogenous to a quasi-split torus.
\end{lem}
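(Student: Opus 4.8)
The plan is to reduce everything to a statement about $\Gamma := \gal(K'/K)$-lattices, i.e. finitely generated $\z$-free $\z[\Gamma]$-modules, via the anti-equivalence between $K$-tori split by $K'$ and such lattices (sending $T$ to its character module $\hat T$). Under this dictionary, quasi-split tori correspond to permutation lattices (direct sums of $\z[\Gamma/\Gamma_i]$ for subgroups $\Gamma_i$), isogenies of tori correspond to injective maps of lattices with finite cokernel, and products of tori correspond to direct sums of lattices. So the lemma becomes: \emph{for every $\Gamma$-lattice $M$ there is an integer $r\geq 1$, a permutation lattice $P_0$, and a permutation lattice $Q$ together with an injection $M^{\oplus r}\oplus P_0\hookrightarrow Q$ with finite cokernel.} (Note the reversal of arrows: isogeny $T^r\times T_0\to T_1$ with $T_1$ quasi-split becomes $\hat T_1 = Q \hookrightarrow \hat T^{\oplus r}\oplus \hat T_0$.) Actually it is cleaner to produce a surjection $P \twoheadrightarrow M^{\oplus r}$ is not quite what we want either; let me phrase the target as: an injection of lattices $M^{\oplus r} \hookrightarrow P$ with $P$ permutation and the cokernel again (stably) permutation after adding $P_0$ — this is exactly the Ono-type statement and it is what one needs.

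The key input is the standard fact that $\Gamma$-lattices become rationally permutation: for any $\Gamma$-lattice $M$, the module $M\otimes_\z\q$ is a direct summand of a permutation $\q[\Gamma]$-module, hence there is a permutation lattice $P$ and a $\Gamma$-homomorphism $f\colon P\to M$ which is surjective after $\otimes\q$, i.e. with finite cokernel. Dualizing (using $M\mapsto M^\vee := \hom_\z(M,\z)$, which sends lattices to lattices and permutation lattices to permutation lattices), one gets an injection $M^\vee \hookrightarrow P^\vee \cong P$ with finite cokernel. First I would apply this to $M^\vee$: choose a permutation lattice $P$ and an injection $i\colon M \hookrightarrow P$ with finite cokernel $C$. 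Then I need to control $C$; the trick (this is really the content of Ono's original argument) is that after taking a large enough power, the extension splits. Concretely, $C$ is a finite $\Gamma$-module, so $\ext^1_{\z[\Gamma]}(C, M)$ is killed by $|C|$; more usefully, consider the class of the extension $0\to M\to P\to C\to 0$ in $\ext^1_{\z[\Gamma]}(C,M)$, which is annihilated by the exponent $e$ of $C$. Replacing $M$ by $M^{\oplus e}$ and $P$ by $P^{\oplus e}$, the sum of $e$ copies of the extension — which represents $e$ times the class, hence $0$ — splits, giving $P^{\oplus e} \cong M^{\oplus e} \oplus C'$ with $C'$ a lattice. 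Hmm, but $C'$ need not be permutation. So one iterates: apply the rational-permutation fact to $C'$ to embed it (with finite cokernel) into a permutation lattice, and repeat. Since the ranks do not grow uncontrollably — each step the "defect" lattice has the same rank as the previous finite-cokernel complement — one needs a finiteness/termination argument.

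The cleaner route, and the one I would actually write, avoids iteration by using a symmetric version: apply the rational-permutation fact to get \emph{both} a permutation lattice $P$ with surjection (finite cokernel) $P\twoheadrightarrow M$ and, dually, a permutation lattice $P'$ with injection (finite cokernel) $M\hookrightarrow P'$. Now $P\oplus (\text{stuff})$ and direct-sum manipulations let one arrange, after passing to $M^{\oplus r}$ for suitable $r$ (to kill the relevant $\ext^1$ with finite coefficients) and adding an auxiliary quasi-split $T_0$ (corresponding to $P_0$, whose role is precisely to absorb the complementary summand so that the total becomes permutation), an isomorphism $M^{\oplus r}\oplus P_0 \cong (\text{permutation lattice})$ up to finite index — i.e. an isogeny to a quasi-split torus. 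Translating back through the anti-equivalence and checking that $T_0$ is split by $K'/K$ (automatic, since $P_0$ is a $\Gamma$-permutation lattice) finishes the proof. The main obstacle is the bookkeeping in the last step: making sure the complementary lattice that appears after splitting the $\ext^1$-class can genuinely be absorbed into a permutation lattice by adding $P_0$ — this is where one invokes that a lattice $C'$ with $C'\oplus(\text{perm})\cong(\text{perm})$-type relations, combined with taking further direct sums, yields the desired conclusion; this is essentially Ono's original computation with $\z[\Gamma]$-modules, which I would cite or reproduce in the finite-module bookkeeping rather than re-derive from scratch. I expect the rest (the anti-equivalence dictionary, permutation $\leftrightarrow$ quasi-split, isogeny $\leftrightarrow$ finite cokernel) to be routine.
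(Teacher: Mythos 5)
Your dictionary between $K$-tori split by $K'$ and $\gal(K'/K)$-lattices (quasi-split $\leftrightarrow$ permutation, isogeny $\leftrightarrow$ finite-index embedding of character lattices) is fine, and so is the input that every $\q[\Gamma]$-module is a summand of a permutation module. But the core of the lemma --- why a suitable power $M^{\oplus r}$, after adding a permutation lattice, is commensurable with a permutation lattice --- is exactly what your proposal does not deliver. The Ext-splitting step is wrong as written: the direct sum of $e$ copies of $0\to M\to P\to C\to 0$ represents the \emph{diagonal} class in $\ext^1_{\z[\Gamma]}(C^{\oplus e},M^{\oplus e})\cong \ext^1_{\z[\Gamma]}(C,M)^{\oplus e^2}$, not $e$ times the original class, so it splits only if the original extension already does; the annihilation $e\cdot\varepsilon=0$ is exploited via a push-out along multiplication by $e$ (which changes the modules), not by taking powers. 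Similarly, ``passing to $M^{\oplus r}$ to kill the relevant $\ext^1$'' is a non sequitur, since $\ext^1(C,M^{\oplus r})\cong\ext^1(C,M)^{\oplus r}$ is not killed by increasing $r$. Finally, even where a splitting can be arranged, the complementary lattice need not be permutation, your iteration has no termination argument, and at the decisive moment you propose to ``cite or reproduce Ono's original computation'' --- which is circular when the statement to be proved is Ono's lemma itself.

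The missing idea (and the one the paper uses) is Artin's induction theorem. The character $\chi_M$ of the rational representation $M\otimes_\z\q$ is rational-valued, and the $\z$-module generated by characters induced from the trivial characters of representatives $\Delta_i$ of the conjugacy classes of cyclic subgroups of $\Gamma$ has some finite index $r$ in the $\z$-module of rational characters. Hence $r\chi_M+\sum_i n_i\chi_i^\Gamma=\sum_i m_i\chi_i^\Gamma$ with $n_i,m_i\geq 0$, giving a $\q[\Gamma]$-isomorphism $(M\otimes_\z\q)^{r}\oplus\bigoplus_i\q[\Gamma/\Delta_i]^{n_i}\cong\bigoplus_i\q[\Gamma/\Delta_i]^{m_i}$; clearing denominators yields a $\Gamma$-equivariant injection with finite cokernel of the permutation lattice $\bigoplus_i\z[\Gamma/\Delta_i]^{m_i}$ into $M^{r}\oplus\bigoplus_i\z[\Gamma/\Delta_i]^{n_i}$, which under your dictionary is precisely an isogeny from $T^{r}\times T_0$ onto a quasi-split torus, with $T_0$ split by $K'$ because its character lattice is a $\gal(K'/K)$-permutation module. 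This single application of Artin induction replaces all of the Ext and iteration bookkeeping in your sketch; without it (or an equivalent substitute) the proposal has a genuine gap.
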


Now, Ono's proof works at the level of geometric characters. Indeed, the assertion of his lemma is easily seen to be equivalent to the following assertion:

\begin{lem}
Let $K$ be any field, $K'/K$ a Galois extension, $\Gamma_{K'/K}$ the corresponding Galois group and $M$ a free $\Gamma_{K'/K}$-module of finite type. Then there exists an integer $r\geq 1$ and induced $\Gamma_{K'/K}$-modules $M_0,M_1$ such that $M_1$ is a sub-$\Gamma_{K'/K}$-module of finite index of $M^r\times M_0$.
\end{lem}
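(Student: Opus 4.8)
The idea is that this lemma is, at bottom, nothing but a reformulation of Artin's induction theorem. First I would reduce to the case where $\Gamma:=\Gamma_{K'/K}$ is finite: since $M$ is $\z$-free of finite rank, the $\Gamma$-action is continuous for the discrete topology on $M$, so it factors through a finite quotient $\gal(K''/K)$ with $K''/K$ a finite subextension of $K'/K$; and inflation sends an induced $\gal(K''/K)$-module $\z[\gal(K''/K)/\bar H]$ to the induced $\Gamma_{K'/K}$-module $\z[\Gamma_{K'/K}/H]$, where $H$ is the preimage of $\bar H$. So we may and do assume $\Gamma$ is finite.

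Next I would observe that, after applying $-\otimes_\z\q$, the desired conclusion becomes exactly an isomorphism of $\q[\Gamma]$-modules
\[(M\otimes_\z\q)^{\oplus r}\ \oplus\ (M_0\otimes_\z\q)\ \cong\ M_1\otimes_\z\q\]
with $M_0\otimes_\z\q$ and $M_1\otimes_\z\q$ permutation modules (finite direct sums of modules $\q[\Gamma/H]$); equivalently, that $r\,[M\otimes_\z\q]$ lies in the subgroup of the rational representation ring $R_\q(\Gamma)$ generated by the classes of permutation modules. This last statement is precisely Artin's induction theorem (finiteness of the Artin cokernel): for a suitable $r\geq 1$, e.g.\ $r=|\Gamma|$, one has $r\,[M\otimes_\z\q]=\sum_i n_i\,[\q[\Gamma/H_i]]$ with $n_i\in\z$ and $H_i\subseteq\Gamma$; moving the terms with $n_i<0$ to the left gives an equality of genuine characters, and since $\q[\Gamma]$ is semisimple this equality is realised by an isomorphism of $\q[\Gamma]$-modules $(M\otimes_\z\q)^{\oplus r}\oplus M_0'\cong M_1'$ with $M_0',M_1'$ permutation $\q[\Gamma]$-modules.

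It then remains to integralize, which is routine. Writing $\Phi\colon (M\otimes_\z\q)^{\oplus r}\oplus M_0'\xrightarrow{\ \sim\ }M_1'$ for the isomorphism just obtained, I would choose the evident $\Gamma$-stable full lattices $M_0:=\bigoplus\z[\Gamma/H_i]\subset M_0'$ and $M_1^\flat:=\bigoplus\z[\Gamma/H_j]\subset M_1'$, both of which are induced $\Gamma$-modules. Then $\Phi(M^{\oplus r}\oplus M_0)$ and $M_1^\flat$ are two full lattices of $M_1'$, hence commensurable, so one may pick a positive integer $N$ with $N\,M_1^\flat\subset\Phi(M^{\oplus r}\oplus M_0)$ and set $M_1:=\Phi^{-1}(N\,M_1^\flat)$. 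By construction $M_1$ is a sub-$\Gamma$-module of $M^{\oplus r}\oplus M_0=M^r\times M_0$, it is of finite index since it is a full lattice, and it is induced, being $\Gamma$-isomorphic to $M_1^\flat$ via the inverse of multiplication by $N$. This gives exactly the assertion.

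The only real input is the representation-theoretic one: as the above shows, the lemma is in fact equivalent to the finiteness of the Artin cokernel, so once that is granted there is nothing left but the two bookkeeping steps (reduction to finite $\Gamma$, and passage between $\q$- and $\z$-coefficients). If a self-contained treatment is wanted instead, one can simply transcribe Ono's original proof of Lemma \ref{lemme Ono}, which runs along exactly these lines directly at the level of character lattices.
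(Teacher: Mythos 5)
Your proposal is correct and takes essentially the same route as the paper, which likewise just transcribes Ono's argument: Artin's induction theorem applied to the rational character of $M\otimes_\z\q$, rearrangement into an isomorphism of genuine $\q[\Gamma]$-permutation modules, and then integralization by passing to commensurable $\Gamma$-stable lattices (your explicit choice of $N$ and $M_1:=\Phi^{-1}(N M_1^\flat)$ is exactly what the paper compresses into ``up to multiplication by an integer''). The only cosmetic remark is that in the reduction to finite $\Gamma$ the factorization through a finite quotient comes from the standing convention that Galois modules are continuous (in the application $M$ is the character lattice of a torus split by a finite extension), not from the mere fact that $M$ is $\z$-free of finite rank.
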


And since Ono's Lemma is valid for any field $K$, this means that the proof works for any (pro)finite group. In other words, if we ``copy-paste'' Ono's proof (which we do here below for the comfort of the reader) we get the following result:

\begin{lem}
Let $\Gamma$ be a finite (abstract) group and $M$ a free $\Gamma$-module of finite type. Then there exists an integer $r\geq 1$ and induced $\Gamma$-modules $M_0,M_1$ such that $M_1$ is a sub-$\Gamma$-module of finite index of $M^r\times M_0$.
\end{lem}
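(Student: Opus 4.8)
The plan is to pass from the integral statement to an assertion about rational representations, where the group algebra is semisimple and classes of modules can be manipulated freely in the representation ring, and then to read off the result from Artin's induction theorem.

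First I would record an elementary \emph{commensurability} observation: if $A$ and $B$ are $\Gamma$-lattices (finitely generated free $\z$-modules with $\Gamma$-action) such that $A\otimes\q\cong B\otimes\q$ as $\q[\Gamma]$-modules, then there is a $\Gamma$-equivariant embedding $B\hookrightarrow A$ of finite index; indeed, fix an isomorphism $A\otimes\q\xrightarrow{\sim}B\otimes\q$, clear the finitely many denominators to obtain a $\Gamma$-map $B\to A$, and note that it is injective of full rank. Granting this, it suffices to produce an integer $r\geq 1$ and permutation $\q[\Gamma]$-modules $W_0,W_1$ (i.e.\ finite direct sums of modules $\q[\Gamma/H]$, $H\leq\Gamma$) together with an isomorphism of $\q[\Gamma]$-modules $V^{\oplus r}\oplus W_0\cong W_1$, where $V:=M\otimes\q$. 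For then one takes $M_0,M_1$ to be the evident integral permutation lattices $\bigoplus\z[\Gamma/H]$ with $M_i\otimes\q\cong W_i$ (these are induced modules), and the commensurability observation applied to $A=M^{\oplus r}\oplus M_0$ and $B=M_1$ yields the desired finite-index inclusion $M_1\hookrightarrow M^{\oplus r}\oplus M_0$.

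Second, I would work in the Grothendieck group $R_\q(\Gamma)$ of finitely generated $\q[\Gamma]$-modules. Since $\q[\Gamma]$ is semisimple, $R_\q(\Gamma)$ is free of finite rank over $\z$ on the classes of $\q$-irreducible representations, and two $\q[\Gamma]$-modules are isomorphic exactly when their classes coincide; so the sought isomorphism $V^{\oplus r}\oplus W_0\cong W_1$ amounts to the identity $r[V]+[W_0]=[W_1]$ in $R_\q(\Gamma)$ for suitable effective permutation classes $[W_0],[W_1]$. This in turn follows as soon as some positive multiple of $[V]$ lies in the subgroup $\mathcal P\subseteq R_\q(\Gamma)$ generated by the classes $[\q[\Gamma/H]]$ of transitive permutation modules: writing $r[V]=\sum_i n_i[\q[\Gamma/H_i]]$ with $n_i\in\z$ and moving the terms with $n_i<0$ to the other side exhibits $r[V]+[W_0]=[W_1]$ with $W_0,W_1$ permutation modules.

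Finally, the one real input is that $\mathcal P$ has finite index in $R_\q(\Gamma)$, which is Artin's induction theorem: the classes $\mathrm{Ind}_C^\Gamma\,\theta$, with $C$ ranging over the cyclic subgroups of $\Gamma$ and $\theta\in R_\q(C)$, generate a finite-index subgroup of $R_\q(\Gamma)$; for $C$ cyclic every element of $R_\q(C)$ is a $\z$-linear combination of the permutation classes $[\z[C/C']]$ (a Möbius inversion over the lattice of divisors of $|C|$, since $\q[C]=\bigoplus_{e\mid|C|}\q(\zeta_e)$); and $\mathrm{Ind}_C^\Gamma$ carries permutation modules to permutation modules (as $\mathrm{Ind}_C^\Gamma\z[C/C']=\z[\Gamma/C']$). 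Hence $\mathcal P\supseteq\sum_C\mathrm{Ind}_C^\Gamma R_\q(C)$ has finite index, so every $[V]$ has a multiple in $\mathcal P$, and we are done. I do not expect a genuine obstacle beyond bookkeeping: the points to watch are that each operation used — direct sum, induction from a subgroup, and passing between a lattice and its rationalization — either preserves the property of being an induced module or changes a lattice only within its commensurability class, and that the arithmetic heart is precisely Artin's theorem, which I would invoke rather than reprove. (Alternatively one could follow Ono's original, more hands-on route, embedding $M$ into the induced module $\z[\Gamma]\otimes_\z M$ and analysing the cokernel, but the representation-ring argument above seems the most transparent.)
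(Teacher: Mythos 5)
Your proposal is correct and follows essentially the same route as the paper: rationalize $M$, use Artin's induction theorem to express a positive multiple of its class as a $\z$-combination of permutation classes, split positive and negative coefficients into an isomorphism of rational representations, and clear denominators to obtain a finite-index $\Gamma$-equivariant embedding of lattices. The only differences are cosmetic (Grothendieck-group language and induction of arbitrary rational classes from cyclic subgroups plus M\"obius inversion, where the paper works directly with rational-valued characters and the induced trivial characters of cyclic subgroups).
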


\begin{proof}
For any $\Gamma$-module $M$, we may consider the associated rational (resp. complex) representation $\rho_M:\Gamma\to\mathrm{GL}(V)$, where $V=M\otimes_\z\q$ (resp. $M\otimes_\z\C$), and we get then an associated character $\chi_M(\gamma):=\mathrm{Tr}(\rho_M(\gamma))$ for $\gamma\in\Gamma$. Characters obtained in this way are clearly \emph{rational}, that is, they take values in $\q$.

Let now $X$ be the $\z$-module generated by all rational characters of $\Gamma$. It is a sub-$\z$-module of finite type of the finite-dimensional vector space of maps $\Gamma\to\q$. Let then $\Delta_i$ ($1\leq i\leq k$) be representatives of the conjugation classes of all cyclic subgroups of $\Gamma$ and consider, for each one of them, the trivial character $\chi_i:\Delta_i\to\q$ associated to it. These give us non-trivial and rational induced characters $\chi_i^\Gamma$ of $\Gamma$ and Artin's Induction Theorem (cf. for example \cite[\S9]{SerreGpsFinis}) tells us then that the $\z$-module generated by the $\chi_i^\Gamma$'s is a submodule of $X$ of \emph{finite} index $r$.\\

Let now $M$ be a free $\Gamma$-module and let $\chi_M$ be its associated character. Since $\chi_M$ is rational, we know then that there exist integers $n_i,m_i\geq 0$ such that
\[r\chi_M+\sum_{i=1}^kn_i\chi_i^\Gamma=\sum_{i=1}^km_i\chi_i^\Gamma.\]
In particular, if we look at the associated (rational) representations of these characters, we get
\[(M\otimes_\z\q)^{r}\oplus\bigoplus_{i=1}^k\q[\Gamma/\Delta_i]^{n_i}\cong\bigoplus_{i=1}^k\q[\Gamma/\Delta_i]^{m_i}.\]
Finally, this $\q$-isomorphism induces, up to multiplication by an integer, a $\Gamma$-equivariant injection with finite cokernel
\[\bigoplus_{i=1}^k\z[\Gamma/\Delta_i]^{m_i}\to M^r\times\bigoplus_{i=1}^k\z[\Gamma/\Delta_i]^{n_i},\]
which amounts to say that the induced $\Gamma$-module on the left (which we may call $M_1$) is a sub-$\Gamma$-module of finite index of the product of $M^r$ and a second induced $\Gamma$-module (which we may call $M_0$).
\end{proof}

We are now able to generalize the use that Ono makes of this lemma to a slightly more general context. Let us concentrate then on $K$-groups $H$ of ``$\torf$'' type (that is, such that $H=H^\torf$, see notations in section \ref{section HS}) for an arbitrary field $K$. In order to lighten notation, we will note $T=H^\tor=H^\circ$, $F=H^\f=H/T$ and $\Gamma=\Gamma_K$ during the rest of this section. We know then that $F$ acts on the torus $T$ by conjugation. Moreover, if we denote by $F_{\Gamma}$ the abstract profinite group $F(\bar K)\rtimes\Gamma$ (the semi-direct product being taken with respect to the natural action of $\Gamma$ over $F(\bar K)$), then the $K$-action of $F$ on $T$ corresponds to an action of $F_\Gamma$ on $T(\bar K)$ whose restriction to $\Gamma$ (seen as a subgroup of $F_\Gamma$ by the natural section) is the natural Galois action (see \cite[\S3]{GLAExt} for details). By duality, we also obtain a continuous action of $F_\Gamma$ on the module $\hat T$ of geometric characters of $T$. It is easy to see then that, when $F$ is smooth, any such action defines a $K$-torus equipped with a $K$-action of $F$ by group automorphisms. Applying the result above in this context we get:

\begin{cor}
Let $K$ be any field, $F$ a smooth finite $K$-group and $M$ a free $F_\Gamma$-module of finite type. Then there exists an integer $r\geq 1$ and induced $F_\Gamma$-modules $M_0, M_1$ such that $M_1$ is a sub-$F_\Gamma$-module of finite index of $M^r\times M_0$.
\end{cor}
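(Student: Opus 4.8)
The plan is to reduce the statement immediately to the case of a finite abstract group, which is exactly the content of the lemma proved just above, the only extra ingredient being that the $F_\Gamma$-action on $M$ is continuous (as it is for the character module $\hat T$ of a torus, which is the situation we care about).

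First I would observe that, since $M$ is a free $\z$-module of finite type, its automorphism group $\aut_\z(M)\cong\mathrm{GL}_r(\z)$ (with $r=\mathrm{rk}_\z M$) is discrete. As the action of the profinite group $F_\Gamma=F(\bar K)\rtimes\Gamma$ on $M$ is continuous, the structure homomorphism $F_\Gamma\to\aut_\z(M)$ therefore has open kernel $N$; equivalently, the action factors through the \emph{finite} quotient $\bar\Gamma:=F_\Gamma/N$. Hence $M$ is a free $\bar\Gamma$-module of finite type and the previous lemma applies to the pair $(\bar\Gamma,M)$.

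That lemma yields an integer $r\geq 1$ and induced $\bar\Gamma$-modules $\bar M_0,\bar M_1$ — each a finite direct sum of modules of the form $\z[\bar\Gamma/\bar\Delta]$ for subgroups $\bar\Delta\leq\bar\Gamma$ — together with a $\bar\Gamma$-equivariant injection of finite index $\bar M_1\hookrightarrow M^{r}\times\bar M_0$. Inflating along the surjection $F_\Gamma\twoheadrightarrow\bar\Gamma$, the same underlying abelian groups and maps give an $F_\Gamma$-equivariant injection of finite index $M_1\hookrightarrow M^{r}\times M_0$; the only point to verify is that $M_0$ and $M_1$ are \emph{induced} as $F_\Gamma$-modules, which holds because the preimage $\Delta\leq F_\Gamma$ of any $\bar\Delta\leq\bar\Gamma$ contains the open subgroup $N$ and is hence itself open, so that $\z[\bar\Gamma/\bar\Delta]=\z[F_\Gamma/\Delta]$ is an induced $F_\Gamma$-module by definition.

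I do not expect a genuine obstacle here: the whole argument amounts to the standard observation that a continuous action of a profinite group on a finitely generated free abelian group factors through a finite quotient, together with the formal remark that inflation carries induced modules to induced modules and preserves finite index. The only subtlety worth spelling out explicitly is the use of the hypothesis that $F$ is smooth, which is what guarantees that $F(\bar K)$ — and hence $F_\Gamma$ — is the profinite group appearing in the discussion preceding the corollary, so that ``$F_\Gamma$-module of finite type'' really does carry a continuous action to which the reduction above can be applied.
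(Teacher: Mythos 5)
Your proposal is correct and follows essentially the same route as the paper: the paper deduces the corollary by observing that the continuous action of the profinite group $F_\Gamma$ on $M$ factors through a finite quotient (identified in the subsequent remark as $F(\bar K)\rtimes\Gamma_{K'/K}$ with $K'/K$ splitting $F$ and $T$) and then invoking the finite-group version of the lemma, which is exactly your open-kernel reduction together with the observation that the inflation of $\z[\bar\Gamma/\bar\Delta]$ along $F_\Gamma\twoheadrightarrow\bar\Gamma$ is the induced module $\z[F_\Gamma/\Delta]$ with $\Delta$ open of finite index. The only cosmetic point is the double use of the letter $r$, once for $\mathrm{rk}_{\z} M$ and once for the integer produced by the lemma; rename one of them.
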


\begin{rem}
Note that we have actually proved that the induced modules $M_0, M_1$ are induced $(F(\bar K)\rtimes\Gamma_{K'/K})$-modules, where $K'/K$ is a Galois extension splitting both $F$ and $T$ (for $F$, this means that $F\times_K{K'}$ is a constant group). This last semi-direct product is well defined and corresponds to the quotient of $F_\Gamma$ by its normal subgroup $\Gamma_{K'}$.
\end{rem}

Let us now translate back what we have just proved to the language of tori.

\begin{defi}
Let $K$ be a field and $F$ a finite $K$-group. We define an \emph{$F$-torus} to be a $K$-torus equipped with a $K$-action of $F$ by group automorphisms.

An $F$-torus is said to be \emph{$F$-quasi-split} if its module of geometric characters $\hat T$ is an induced $F_\Gamma$-module.

An $F$-morphism of $F$-tori is an $F$-equivariant $K$-morphism of tori. An $F$-isogeny is then a surjective $F$-morphism with finite kernel.
\end{defi}

In particular, any algebraic group $H$ defines (via its quotient $H^\torf$) an $H^\f$-torus $H^\tor$, as we mentionned above.

\begin{cor}\label{lemme super Ono}
Let $K$ be any field, $F$ a smooth finite $K$-group, $T$ an $F$-torus and $K'/K$ a Galois extension splitting $T$ and $F$. Then there exists an integer $r\geq 1$ and an $F$-quasi-split torus $T_0$, split by $K'/K$, such that $T^r\times T_0$ is $F$-isogenous to an $F$-quasi-split torus.
\end{cor}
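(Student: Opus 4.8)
The plan is to read this off from the module-theoretic Corollary by dualizing, using the anti-equivalence between $K$-tori split by a finite Galois extension $K'/K$ and finitely generated free $\z$-modules equipped with a continuous action of $F(\bar K)\rtimes\Gamma_{K'/K}$ — which, since $F$ is smooth, is precisely the dictionary recalled in the discussion preceding the Corollary for encoding $F$-tori (split by $K'$) together with their $F$-action.

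First I would set $M:=\hat T$, the module of geometric characters of $T$. It is a free $\z$-module of finite rank and it carries the continuous $F_\Gamma$-action dual to the given $K$-action of $F$ on $T$; since moreover $K'/K$ splits both $T$ and $F$, this action factors through $F(\bar K)\rtimes\Gamma_{K'/K}$. Applying the Corollary (and its Remark) to $M$ produces an integer $r\geq 1$, two \emph{induced} $F_\Gamma$-modules $M_0,M_1$ (which we may take to be induced $(F(\bar K)\rtimes\Gamma_{K'/K})$-modules), and an $F_\Gamma$-equivariant injection $M_1\hookrightarrow M^r\times M_0$ with finite cokernel. Let then $T_0$ (resp. $T_1$) denote the $K$-torus with $\widehat{T_0}=M_0$ (resp. $\widehat{T_1}=M_1$), equipped with the $K$-action of $F$ dual to the $F_\Gamma$-action on $M_0$ (resp. $M_1$). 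By definition of $F$-quasi-split and since $M_0,M_1$ are induced $F_\Gamma$-modules, both $T_0$ and $T_1$ are $F$-quasi-split; and since their $F_\Gamma$-action factors through $\Gamma_{K'/K}$, both are split by $K'/K$.

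Finally, the torus $T^r\times T_0$ has character module $M^r\times M_0$, so dualizing the $F_\Gamma$-equivariant injection with finite cokernel $M_1\hookrightarrow M^r\times M_0$ yields an $F$-equivariant surjection with finite kernel $T^r\times T_0\twoheadrightarrow T_1$, that is, an $F$-isogeny onto the $F$-quasi-split torus $T_1$, which is exactly the assertion. The argument is essentially formal; the only points that need a little care are that the anti-equivalence reverses arrows (so the finite cokernel on characters becomes the finite kernel of an isogeny) and that equivariance for the full group $F_\Gamma$ on characters translates into $F$-equivariance of the resulting morphism of tori — the latter relying on the smoothness of $F$, which guarantees that the $F_\Gamma$-lattices in play genuinely arise from $F$-tori.
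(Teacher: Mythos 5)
Your proposal is correct and follows exactly the route the paper intends: the corollary is obtained by translating the module-theoretic statement (together with its Remark guaranteeing the modules are induced $(F(\bar K)\rtimes\Gamma_{K'/K})$-modules, hence split by $K'$) back through the anti-equivalence between $F$-tori and $F_\Gamma$-lattices, with the finite-index inclusion of character modules dualizing to the desired $F$-isogeny. No gaps; this matches the paper's (implicit) argument.
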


The notion of $F$-quasi-split torus will be essential in order to prove the Main Theorem. Its main feature in what concerns us is the following: whenever we are given an $F$-torus $T$, we may consider the twisted torus $\asd{}{X}{}{}{T}$ for any principal homogeneous space (or torsor) $X$ of $F$ which, let us recall, are classified up to isomorphism by the Galois cohomology set $H^1(K,F)$. The torus $\asd{}{X}{}{}{T}$ is always a $K$-torus, but in general not an $F$-torus (it has however the structure of a $\asd{}{X}{}{}{F}$-torus, but this does not concern us). Concerning quasi-split tori, one could hope that they would be stable under twisting, i.e. that the twist of a quasi-split torus is also quasi-split, but this is far from being true in general. However, if our torus happens to be \emph{$F$-quasi-split}:

\begin{pro}\label{prop tore tordu}
Let $K$ be any field, $F$ a smooth finite $K$-group and $T$ an $F$-quasi-split torus. Let $X$ be an $F$-torsor over $K$. Then the twisted torus $\asd{}{X}{}{}{T}$ is quasi-split as a $K$-torus.
\end{pro}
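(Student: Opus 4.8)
The plan is to work dually, on the module of geometric characters $\widehat{T}$, and to show that twisting by an $F$-torsor $X$ does not destroy the property of being an induced $\Gamma_K$-module, even though it destroys the $F_\Gamma$-action. First I would recall the dictionary: the $F$-quasi-split hypothesis says that $\widehat{T}$, as an $F_\Gamma$-module, is isomorphic to a direct sum of modules of the form $\z[F_\Gamma/\Lambda_i]$ where, by the Remark following Corollary \ref{lemme super Ono}, we may even take each $\Lambda_i$ to contain $\Gamma_{K'}$, so that everything is visible over the finite group $F(\bar K)\rtimes\Gamma_{K'/K}$. Since induction commutes with finite direct sums, it suffices to treat a single summand, i.e. to assume $\widehat{T}=\z[F_\Gamma/\Lambda]$ for one open subgroup $\Lambda\le F_\Gamma$ with $\Gamma_{K'}\le\Lambda$.

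Next I would make the twist explicit. An $F$-torsor $X$ over $K$ corresponds to a continuous cocycle $c:\Gamma_K\to F(\bar K)$, and the twisted torus $\asd{}{X}{}{}{T}$ has the same geometric character module $\widehat{T}$ but with the Galois action $\sigma\mapsto c_\sigma\cdot\sigma$, where on the right $\sigma$ acts through $F_\Gamma$ via its natural section and $c_\sigma\in F(\bar K)\subset F_\Gamma$. So the question becomes purely group-theoretic: given the finite group $\widetilde{F}:=F(\bar K)\rtimes\Gamma_{K'/K}$ with its two subgroups $F(\bar K)$ and $\Gamma:=\Gamma_{K'/K}$ (the latter a complement), and the cocycle $c:\Gamma\to F(\bar K)$, twist the $\Gamma$-action on $\z[\widetilde{F}/\Lambda]$ by $c$ and show the result is $\Gamma$-induced. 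The key observation is that twisting the inclusion $\Gamma\hookrightarrow\widetilde F$ by the cocycle $c$ produces another complement $\Gamma_c:=\{c_\sigma\sigma:\sigma\in\Gamma\}$ to $F(\bar K)$ in $\widetilde F$, and the twisted $\Gamma$-action on $\z[\widetilde F/\Lambda]$ is exactly the restriction to $\Gamma_c$ of the (ambient, untwisted) $\widetilde F$-action on $\z[\widetilde F/\Lambda]$, transported back to $\Gamma$ via the isomorphism $\Gamma\xrightarrow{\sim}\Gamma_c$, $\sigma\mapsto c_\sigma\sigma$.

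So the heart of the matter is the following lemma of pure finite group theory: if $\widetilde F$ is a finite group, $A\trianglelefteq\widetilde F$ a normal subgroup with two complements $\Gamma$ and $\Gamma'$ (here $A=F(\bar K)$, $\Gamma'=\Gamma_c$), and $\Lambda\le\widetilde F$ is any subgroup, then $\z[\widetilde F/\Lambda]$ is induced as a $\Gamma$-module if and only if it is induced as a $\Gamma'$-module — indeed in both cases one just computes the restriction of a permutation module to a complement. Concretely, $\z[\widetilde F/\Lambda]\cong\bigoplus_{\text{double cosets }\Gamma' g\Lambda}\z[\Gamma'/(\Gamma'\cap g\Lambda g^{-1})]$ by the Mackey-type decomposition of a permutation module restricted to a subgroup, which is manifestly a sum of induced $\Gamma'$-modules; transporting along $\Gamma\cong\Gamma'$ gives a sum of induced $\Gamma$-modules. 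Hence $\asd{}{X}{}{}{T}$ is quasi-split over $K$.

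The step I expect to be the main obstacle is checking that the twisted Galois action on $\widehat{\asd{}{X}{}{}{T}}$ really is the restriction-along-a-complement described above — i.e.\ correctly chasing the cocycle through the semidirect-product bookkeeping (which $\sigma$ acts on which side, and that $c$ descends to a cocycle valued in $F(\bar K)$ with values in $\widetilde F=F(\bar K)\rtimes\Gamma_{K'/K}$ after one checks $c$ is inflated from $\Gamma_{K'/K}$ up to the usual adjustment), together with verifying that $\Gamma_c$ is again a complement, which uses precisely the cocycle identity $c_{\sigma\tau}=c_\sigma\,{}^\sigma c_\tau$. Once that identification is in place, the Mackey decomposition is routine and the conclusion is immediate; reducing first to a single induced summand and to the finite quotient $\widetilde F$ (legitimate by the Remark) is what keeps the group theory elementary.
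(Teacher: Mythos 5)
Your argument is essentially the paper's: the key point in both is that the twisted Galois action on $\hat T$ is nothing but the restriction of the $F_\Gamma$-action to the twisted section $s_x(\Gamma)$ (your $\Gamma_c$, a complement by the cocycle identity), and an induced (permutation) module restricts to an induced module --- the paper asserts this restriction step directly, while you spell it out via the orbit/Mackey decomposition, which is fine. One caveat: your preliminary reduction to the finite group $F(\bar K)\rtimes\Gamma_{K'/K}$ is unnecessary and slightly off as stated, since the cocycle $c$ need not be inflated from $\Gamma_{K'/K}$ (the torsor $X$ need not be split by $K'$); either enlarge $K'$ to a finite Galois extension splitting $X$ as well, or simply run the same argument inside $F_\Gamma$ itself, where restriction of a finitely generated permutation module to the closed subgroup $s_x(\Gamma)$ is again a permutation module with open stabilizers.
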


\begin{proof}
It is a classic result in group cohomology that the set $H^1(K,F)$ classifies, up to conjugation, the continuous sections of the split exact sequence
\[\xymatrix{
1 \ar[r] & F(\bar K) \ar[r] & F_\Gamma \ar[r] & \Gamma \ar[r] \ar@/_1pc/[l]_s & 1,
}\]
where $s$ denotes the natural section (cf. for example \cite[I.5.1, Exer. 1]{SerreCohGal}). In particular, a cocycle $x\in Z^1(K,F)$ given by $X$ explicitly defines such a section $s_x$ by sending $\sigma\in\Gamma$ to $s_x(\sigma):=x_\sigma s(\sigma)\in F_\Gamma$. Now, by the very definition of twisting, we see that the action of $\Gamma$ on $\asd{}{X}{}{}{T}(\bar K)=T(\bar K)$ is nothing but the restriction of the action of $F_\Gamma$ to $s_x(\Gamma)$ and it goes the same way for $\asd{}{X}{}{}{\hat T}=\hat T$. Then, since $T$ is $F$-quasi-split, $\hat T$ is an induced $F_\Gamma$-module and hence an induced $s_x(\Gamma)$-module by restriction, telling us that $\asd{}{X}{}{}{T}$ is quasi-split as a $K$-torus.
\end{proof}

\section{Proof of the Main Theorem}\label{Proof}
We restate here the Main Theorem in a more precise version, which we prove in what follows. Recall that $H^\f$ denotes the finite quotient group of connected components of $H$ (see notations in section \ref{section HS}).

\begin{thm}\label{MainThm}
Let $k$ be a number field, $G$ a connected algebraic $k$-group, $H$ an arbitrary $k$-subgroup of $G$ and let $V=G/H$. Then there exists a finite $k$-group $F$, extension of $H^\f$ by an abelian group, such that, if the Brauer-Manin obstruction to weak approximation is the only obstruction for $\sln/F$ (for some embedding of $F$ into $\sln$), then this is also the case for $V$.
\end{thm}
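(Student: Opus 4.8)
The plan is to reduce step by step from an arbitrary $G/H$ to the case of $\sln/F$ with $F$ finite, always preserving $k$-stable birationality (and hence, since all the homogeneous spaces involved have finite $\brnr/\br k$, the BM-WA property). The first reduction handles the ``ssu'' part of the stabilizer. Applying Lemma~\ref{lemme reduction sc} to $G$, we get an extension $1\to T_1\to G'\to G\to 1$ with $T_1$ a torus and $(G')^\ssu$ simply connected; pulling $H$ back to a subgroup $H'$ of $G'$, one checks $V=G/H$ is (after the usual $\gm$-type manipulations, cf. Lemma~\ref{lemme stable bir}) $k$-stably birational to $G'/H'$, and $H'$ is an extension of $H$ by $T_1$. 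So we may assume from the start that $G^\ssu$ is simply connected. Next, consider the normal subgroup $H^\ssu\trianglelefteq H$ and the normal subgroup $N$ of $G$ it generates together with $G^\ssu$; using Lemma~\ref{lemme fibration} one fibers $V=G/H$ over $\bar V=(G/N)/(H/(H\cap N))$ with fibers homogeneous spaces of simply-connected-times-unipotent type. By \cite[Pro.~3.4]{Borovoi96} such fibers have weak approximation, and a standard fibration argument (combined with Proposition~\ref{prop BM}, which provides the needed sections at the level of adelic Brauer sets) transfers BM-WA from the base $\bar V$ to $V$. This reduces us to the case $H=H^\torf$, i.e. $H$ is an extension of the finite group $F_0:=H^\f$ by the torus $T:=H^\tor$.

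The second reduction makes the torus $T$ nice. Here is where the $F$-torus machinery of Section~\ref{section Ono} enters. Viewing $T$ as an $F_0$-torus, Corollary~\ref{lemme super Ono} gives an integer $r\geq 1$ and an $F_0$-quasi-split torus $T_0$ such that $T^r\times T_0$ is $F_0$-isogenous to an $F_0$-quasi-split torus $T_1$. The idea, exactly as in Ono's original application, is to replace $H$ by an auxiliary group built from $T^r\times T_0$ (or $T_1$): enlarging $G$ by a suitable $k$-rational special group (quasi-split tori are special and $k$-rational) carrying a compatible action, and invoking Lemma~\ref{lemme stable bir} to keep stable birationality, we may arrange that the toric part of the stabilizer is an $F_0$-quasi-split torus. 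Concretely, after this step we are reduced to a homogeneous space $G_2/H_2$ with $H_2=H_2^\torf$, $H_2^\tor$ an $F_0$-quasi-split torus, and $H_2^\f=F_0=H^\f$.

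The final reduction replaces $H_2$ by a finite group. Apply Proposition~\ref{proposition existence} to $H_2$ (which satisfies $H_2=H_2^\torf$): it yields a finite $k$-subgroup $F\hookrightarrow H_2$, an extension of $H_2^\f=H^\f$ by the abelian group $H_2^\tor[m]$, fitting in a commutative diagram with $H_2$. Now I want to say that $G_2/H_2$ is $k$-stably birational to $\sln/F$ for a suitable embedding $F\hookrightarrow\sln$. This is where Proposition~\ref{prop tore tordu} is crucial and where the main work lies: one realizes $G_2/H_2$ as a quotient of a homogeneous space with stabilizer $F$ by a torsor-like fibration whose fibers are (twists of) $H_2^\tor/H_2^\tor[m]\cong H_2^\tor$; because $H_2^\tor$ is $F_0$-quasi-split, Proposition~\ref{prop tore tordu} guarantees that the relevant twisted torus is quasi-split, hence $k$-rational and special, so Lemma~\ref{lemme stable bir} (or a direct fibration-rationality argument as in its proof) applies and gives $k$-stable birationality of $G_2/H_2$ with $(G_2\times\sln)/F$, and then with $\sln/F$ by one more application of Lemma~\ref{lemme stable bir}. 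Stringing the three reductions together and tracking that $F$ is an extension of $H^\f$ by an abelian group completes the proof.

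I expect the main obstacle to be the middle and last reductions: namely, organizing the ambient-group enlargements so that the various conjugation actions remain compatible (so that the hypotheses of Lemma~\ref{lemme stable bir} are genuinely met) and, above all, checking that the twisted tori appearing as fibers in the last step are exactly the twists of an $F_0$-quasi-split torus by an $F_0$-torsor, so that Proposition~\ref{prop tore tordu} can be invoked. The passage from ``$F_0$-quasi-split torus'' to ``quasi-split after twisting, hence special and $k$-rational'' is the linchpin that makes the whole descent work, and verifying that it applies at the right place is the delicate point.
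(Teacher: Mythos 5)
Your overall strategy matches the paper's (fibration lemma plus Borovoi's ssu result, the $F$-torus version of Ono's Lemma, Proposition \ref{proposition existence}, and quasi-splitness after twisting), but each of your three reductions has a concrete gap. The first one fails as stated: since $H^\ssu\subset G^\ssu$, your $N$ is just $G^\ssu$, and fibering $V=G/H$ directly over the quotient gives fibers that are homogeneous spaces of $G^\ssu$ with stabilizer $H\cap G^\ssu$, which in general strictly contains $H^\ssu$ and may have finite and toric parts; such fibers are not of ssu type, so \cite[Prop.~3.4]{Borovoi96} gives no weak approximation for them. Worse, the base is a homogeneous space of the torus $G/G^\ssu=G^\tor$, so its stabilizer is the abelian image of $H$, not $H^\torf$: you lose $H^\f$ entirely. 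The missing idea is the diagonal trick of Step 1 of the paper: first embed $H^\torf$ into an auxiliary $\sl_{n_1}$, replace $V$ by the $k$-stably birational $(\sl_{n_1}\times G)/H$ via Lemma \ref{lemme stable bir}, and only then apply Lemma \ref{lemme fibration} to $G^\ssu$; this is what forces the base stabilizer to be exactly $H^\torf$ and the fiber stabilizer to be exactly $H^\ssu$. Note also that at this step there is no section, so Proposition \ref{prop BM}(ii) is not what transfers BM-WA; one pushes the adelic point down with \ref{prop BM}(i), approximates on the base, and uses openness at the archimedean places together with weak approximation of the ssu fibers (the section argument is what the paper uses in Steps 2 and 4, where a section is built by hand).

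The middle and final reductions have the gaps you yourself suspected. Corollary \ref{lemme super Ono} only makes $T^r\times P$ \emph{$F_0$-isogenous} to an $F_0$-quasi-split torus; you cannot arrange the toric part of the stabilizer to be $F_0$-quasi-split on the nose, and your last reduction leans on exactly that. The paper's fix is to enlarge the finite subgroup given by Proposition \ref{proposition existence} by the kernel $A$ of $S\xrightarrow{\,m\,}S\to Q$, so that the generic fiber of $V_3\to V_2$ is a torsor under the twist of the $F_0$-quasi-split torus $Q\cong S/A$, and then Proposition \ref{prop tore tordu} plus Shapiro and Hilbert 90 give triviality and rationality; with your choice of $F$ the fiber is a torsor under a twist of $S/S[m]\cong S$, which need be neither trivial nor rational, so the stable birationality claim breaks. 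Finally, you cannot discard the ambient factor $G^\tor$ ``by one more application of Lemma \ref{lemme stable bir}'': that lemma requires the factor to be special and $k$-rational, which $G^\tor$ is not in general. This is exactly why the paper has a Step 4, applying the classical Ono Lemma a second time to replace $\sl_{n_3}\times G^\tor$ by $\sl_{n_3}\times Q'$ with $Q'$ quasi-split through an isogeny, at the cost of enlarging $F$ by one more finite abelian kernel; this step (and the corresponding enlargement of $F$, still an extension of $H^\f$ by an abelian group) is absent from your sketch.
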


\begin{rem}
As we remarked in the Introduction, Lemma \ref{lemme stable bir} tells us that we do not need to care about the embedding of $F$ into $\sln$ since all such quotients are $k$-stably birational to each other. The BM-WA property (see notations in section \ref{section BM}) for $\sln/F$ depends only on $F$ then.
\end{rem}

\begin{proof}
We reduce first using Borovoi's results and Ono's Lemma to a homogeneous space with ambient space $G=\sln\times G^\tor$ and stabilizer $H^\torf$ (step 1). Next, using our improved version of Ono's Lemma, we reduce to the case where $H^\tor$ is isogenous to an $H^\f$-quasi-split torus (step 2). We treat then this particular case by constructing a homogeneous space which is $k$-stably birational to it but with finite stabilizer (step 3). Finally, we take away the toric part from $G$ by one last application of Ono's Lemma (step 4).

\paragraph*{Step 0: We may assume $G^\ssu$ to be simply connected.} Indeed, by Lemma \ref{lemme reduction sc}, we know that we may view $V$ as a homogeneous space of a bigger connected group $G'$ such that $(G')^\ssu$ is simply connected. We may thus assume that $G^\ssu$ is simply connected from now on. Note that $H^\f$ is not modified by this assumption.

\paragraph*{Step 1: Reduction to $G_1/H_1$ with $H_1=H^\torf$ and $G_1=\sl_{n_1}\times G^\tor$.} Consider the quotient $H^\torf$ and embed it into $\sl_{n_1}$ for some $n_1$. Denote by $G'_1$ the direct product $\sl_{n_1}\times G$. Composing with the projection $H\to H^\torf$ we get a morphism $H\to \sl_{n_1}$ and hence a diagonal inclusion $H\hookrightarrow G'_1$. Denote by $W_1$ the homogeneous space $G'_1/H$ thus obtained. Since $\sl_{n_1}$ is special and $k$-rational, Lemma \ref{lemme stable bir} tells us that $W_1$ is $k$-stably birational to $V$, which means that we are reduced to prove the theorem for this homogeneous space.\\

Apply then Lemma \ref{lemme fibration} to $G'_1$ an to its normal subgroup $G^\ssu\subset G\subset G'_1$. We get a homogeneous space $V_1$ of $G_1:=\sl_{n_1}\times G^\tor$ whose stabilizer $H_1$ is easily seen to be isomorphic to $H^\torf$, as well as a $G'_1$-equivariant map $W_1\to V_1$ whose fibers are homogeneous spaces of $G^\ssu$ with geometric stabilizer isomorphic to $H^\ssu$. Assume then that BM-WA holds for $V_1$ and let $(y_v)\in W_1(k_\Omega)^\br$ be a family of local points orthogonal to $\br W_1$ and $(z_v)$ be its image in $V_1$. By Proposition \ref{prop BM}(i), we have $(z_v)\in V_1(k_\Omega)^\br$ and hence, by our assumption, we know that we can find a $k$-point $x\in V_1(k)$ as close as we want to $(z_v)$. In particular, we may assume that $x_v$ is close to $z_v$ for every archimedean place of $k$. The fiber above $x$ is then a homogeneous space of the simply connected group $G^\ssu$ with connected stabilizer isomorphic to $H^\ssu$ which has a $k_v$-point for every archimedean place of $k$ since it is close enough to the respective $y_v$'s (here we use the fact that $W_1\to V_1$ is smooth and hence $W_1(k_v)\to V_1(k_v)$ is an open map). By \cite[Prop. 3.4]{Borovoi96}, we know then that this fiber verifies weak approximation, which tells us that we may approach $(y_v)$ as much as we want by a $k$-point. This reduces the proof to the homogeneous space $V_1=G_1/H_1$.

\paragraph*{Step 2: reduction to $G_2/H_2$ where $G_2=\sl_{n_2}\times G^\tor$ and $H_2$ is an extension of $H^\f$ by a torus which is $H^\f$-isogenous to an $H^\f$-quasi-split torus.}
We have now $V_1=G_1/H_1$ with $G_1=\sl_{n_1}\times G^\tor$ and $H_1=H^\torf$. In order to ease notation, let us note $T$ the torus $H_1^\tor=H^\tor$ during this step.\\

Note first that $T$ has a natural structure of $H^\f$-torus. By Corollary \ref{lemme super Ono}, there exists an integer $r\geq 1$ and quasi-split $H^\f$-tori $P, Q$ such that there is an $H^\f$-isogeny $T^r\times P\to Q$ (see definitions in section \ref{section Ono}). Define \begin{align*}
R &:=T^{r-1}\times P, \\
S &:=T^r\times P=T\times R, \\
H_2 &:= R\rtimes H_1,
\end{align*}
where $H_1$ acts diagonally on $R$ via its quotient $H^\f$. It is easy to see that $H_2$ is an extension of $H^\f$ by $S$ inducing the natural action of $H^\f$ on each copy of $T$ and on $P$.\\

Recall now that the group $H_1=H^\torf$ was embedded by construction into $\sl_{n_1}$ in step 1. Up to taking a bigger $n_1$ at that point, we may assume then that the whole group $H_2$ embeds into $\sl_{n_1}$ and that the inclusion of $H_1$ in $G_1$ factors through this embedding. Recall also that $G_1$ is a direct product of a torus and $\sl_{n_1}$ and hence it acts on $\sl_{n_1}$ by conjugation. We may thus consider a new copy of $\sl_{n_1}$, which we denote by $\sl_{n_1}'$, and the semi-direct product
\[G_2':=\sl_{n_1}'\rtimes G_1=(\sl_{n_1}'\rtimes\sl_{n_1})\times G^\tor.\]
The subgroup $H_1\subset G_1$ defines a subgroup $\sl_{n_1}'\rtimes H_1\subset G_2'$. Now, we know that there is a copy $H_2'$ of $H_2$ lying inside this normal subgroup $\sl_{n_1}'$ and hence a copy $R'$ of $R$ inside it and, since the action is by conjugation, this copy of $R$ will clearly be $H_1$-invariant (recall that $H_2=R\rtimes H_1$). We may thus consider the subgroup $R'\rtimes H_1$ of $G_2'=\sl_{n_1}'\rtimes G_1$, where $R'\subset \sl_{n_1}'$ and $H_1\subset G_1$. This subgroup is isomorphic to $H_2$ and its image under the projection $G_2'\to G_1$ is $H_1$. To summarize, we have
\[\begin{array}{ccccc}
G_2' & = &\sl_{n_1}' &\rtimes & G_1 \\
\cup && \cup && \cup \\
H_2 & \cong & R' &\rtimes & \,H_1.
\end{array}\]
In particular, by Lemma \ref{lemme fibration} applied to the normal subgroup $\sl_{n_1}'$, we have a projection $W_2:=G_2'/H_2\twoheadrightarrow G_1/H_1=V_1$ for which we claim that there is a section $V_1\to W_2$. Indeed, as summarized by the diagram below, the natural section $G_1\to G_2'$ is clearly $H_1$-equivariant (here we view $H_1$ as a subgroup of $H_2$ via its natural section and the action is by multiplication on the right) and hence induces a section $G_1/H_1\to G_2'/H_1$ of the natural projection $G_2'/H_1\twoheadrightarrow G_1/H_1$. Now, it is clear that this last projection factors through $G_2'/H_2$, giving thus the section $G_1/H_1\to G_2'/H_2$ by composition.
\[\xymatrix{
G_2' \ar@{->>}[d] \ar@{->>}[r] & G_2'/H_1 \ar@{->>}[d] \ar@{->>}[r] & G_2'/H_2 \ar@{->>}[dl] \\
G_1 \ar@/_1pc/[u] \ar@{->>}[r] & \,G_1/H_1. \ar@/_1pc/[u]
}\]

Assume now that BM-WA holds for $W_2$ and let $(x_v)\in V_1(k_\Omega)^\br$. Then its image $(y_v)\in W_2(k_\Omega)$ via the section $V_1\to W_2$ is in $W_2(k_\Omega)^\br$ by Proposition \ref{prop BM}(ii). We can thus approximate as much as we want $(y_v)$ by a $k$-point in $W_2$. It suffices then to push down this $k$-point to $V_1$ in order to approximate $(x_v)$. Thus we are reduced to the case of $W_2=G_2'/H_2$.

Finally, embed now $H_2$ into some new $\sl_{n_2}$ for some $n_2$. Since $\sl_{n_1}\rtimes\sl_{n_1}$ is special and $k$-rational, Lemma \ref{lemme stable bir} allows us to replace our $G_2'=(\sl_{n_1}\rtimes\sl_{n_1})\times G^\tor$ by $G_2:=\sl_{n_2}\times G^\tor$ (and hence $W_2$ by $V_2:=G_2/H_2$).

\paragraph*{Step 3: reduction to $G_3/H_3$ where $G_3=G_2$ and $H_3$ is an extension of $H^\f$ by a finite abelian group.}
We have now $V_2=G_2/H_2$ with $G_2=\sl_{n_2}\times G^\tor$ and $H_2$ an extension of $H^\f$ by a torus $S$ which is $H^\f$-isogenous to the $H^\f$-quasi-split torus $Q$.\\

First of all, Proposition \ref{proposition existence} tells us that there exists a finite subgroup $H_3'\subset H_2$ surjecting onto $H^\f$, with $S\cap H_3'=S[m]$ for some $m\in\n$. On the other hand, the kernel of the $H^\f$-isogeny $S\to Q$ is by definition a finite group on which $H^\f$ acts. Denote then by $A$ the kernel of the composition
\[S\xrightarrow{m} S \to Q,\]
where $m$ denotes multiplication by $m$ on $S$. This is still a finite subgroup over which $H^\f$ acts since multiplication by $m$ is $H^\f$-equivariant. Define then $H_3$ to be the subgroup of $H_2$ generated by $H_3'$ and $A$. Since $A$ is $H^\f$-invariant and contains the $m$-torsion of $S$, it is easy to see that $H_3$ is a finite group, extension of $H^\f$ by $A$. Putting $G_3:=G_2$, we have then the following commutative diagram
\[\xymatrix@=2mm{
& G_2 \ar[dd] \ar[dr] & \\
& & G_3/H_3 \ar@{=}[r] \ar[dl] & V_3 \\
V_2 \ar@{=}[r]& G_2/H_2.
}\]
The vertical arrow is a right $H_2$-torsor, defining thus en element $[X]$ of $H^1(k(V_2),H_2)$ when one takes the generic fiber. A classic result in non-abelian cohomology (cf. \cite[I.5.3]{SerreCohGal}) tells us then that $X$ is also a left $\asd{}{X}{}{2}{H}$-torsor. It is not difficult then to see that the generic fiber of the lower right arrow is a left $k(V_2)$-homogeneous space $W_3$ of the group $\asd{}{X}{}{2}{H}$ whose geometric stabilizer is $H_3\times_k \overline{k(V_2)}$. In other words, we have the diagram
\[\xymatrix@=2mm{
X \ar[dd] \ar[dr] & \\
& X/H_3 \ar[dl] \ar@{=}[r] & W_3  \\
k(V_2).
}\]
Now, since the geometric stabilizer is $H_3\times_k \overline{k(V_2)}$ and $H_3$ intersects every connected component of $H_2$, it is easy to see that the left action of $\asd{}{X}{\tor}{2}{H}=\asd{}{X}{}{}{S}$ on $W_3$ is transitive at the level of $\overline{k(V_2)}$-points, which means that $W_3$ is also a left homogeneous space of $\asd{}{X}{}{}{S}$ with geometric stabilizer $A\times_{k}\overline{k(V_2)}$. Now, this stabilizer is actually defined over $k(V_2)$ and is nothing but $\asd{}{X}{}{}{A}$ (recall that $H^\f$ acts on $A$ and that twisting is defined via this action), telling us finally that $W_3$ is a left \emph{torsor} of the \emph{quasi-split} torus
\[\asd{}{X}{}{}{Q}\cong\asd{}{X}{}{}{S}/\asd{}{X}{}{}{A},\]
quasi-splitness being a consequence of Proposition \ref{prop tore tordu} since $Q$ was $H^\f$-quasi-split. Then, by Shapiro's Lemma and Hilbert's Theorem 90, $W_3$ must be the trivial torsor. This tells us that $V_3$ is $k$-birational to the $k(V_2)$-torus $\asd{}{X}{}{}{Q}$ and, since this last is $k(V_2)$-rational by quasi-splitness, we see that $V_3$ is rational over $k(V_2)$, i.e. $k$-stably birational to $V_2$. This reduces us to the case of $V_3=G_3/H_3$.

\paragraph*{Step 4: Reduction to $G_4/H_4$ with $G_4=\sl_{n_4}$ and $H_4$ an extension of $H^\f$ by a finite abelian group.}
Let us note $G^\tor$ by $T'$ during this last step in order to ease notation. We have now $V_3=G_3/H_3$ with $G_3=\sl_{n_3}\times T'$ ($n_3=n_2$) and $H_3$ an extension of $H^\f$ by a finite abelian group $A$.\\

By Ono's Lemma (Lemma \ref{lemme Ono}), there exist quasi-split $k$-tori $P'$, $Q'$ such that there exists a $k$-isogeny $Q'\to {T'}^m\times P'$. Denote by $A'$ the corresponding finite abelian kernel and consider then the product
\[G_3\times {T'}^{m-1}\times P'=\sl_{n_3}\times {T'}^m\times P',\]
and its subgroup $H_3$ seen as a subgroup of the first factor. We clearly have then
\[W_4:=(G_3\times {T'}^{m-1}\times P')/H_3=V_3\times {T'}^{m-1}\times P'\]
Put $G_4':=\sl_{n_3}\times Q'$. We have then an isogeny with kernel $A'$
\[G_4'\to G_3\times {T'}^{m-1}\times P',\]
and hence $W_4$ is seen to be equal to $G_4'/H_4$, where $H_4$ is an extension of $H_3$ by $A'$. Note moreover that $H_4$ is still an extension of $H^\f$ by an abelian group. Indeed, the kernel of the natural projection $H_4\to H^\f$ is an extension of $A$ by $A'$, contained in $G_4'$, whose projections to both $\sl_{n_3}$ and $Q'$ are clearly abelian, hence the whole extension is abelian too.\\

Assume now that BM-WA holds for $W_4$. Since clearly the projection $W_4\to V_3$ has a section, we know by the same argument given at the end of step 2 that we may reduce us to the case of $W_4$.

Embed then $H_4$ into some new group $G_4:=\sl_{n_4}$ for some $n_4$. Then both $G_4$ and $G_4'$ are special $k$-rational groups since $Q'$ is quasi-split and hence $k$-rational. A double application of Lemma \ref{lemme stable bir} tells us then that $G_4/H_4$ is $k$-stably birational to $G'_4/H_4$, so that $H_4$ is the finite group $F$ embedded into $\sln$ that we were looking for.
\end{proof}

\section{An unconditional result}\label{section resultat inconditionnel}
In \cite[Thm. 1]{HarariBulletinSMF}, Harari used fibration methods to prove that if BM-WA holds (see notations in section \ref{section BM}) for a quotient $V=\sln/G$ with $G$ a finite $k$-group, then it still holds for a quotient $\sln/E$ with $E=A\rtimes G$ and $A$ a finite abelian $k$-group (recall that all these properties are independent of the embeddings into $\sln$).

As a particular case of Theorem \ref{MainThm}, we may now add a connected component to this stabilizer. Indeed, we have:

\begin{pro}\label{prop david++}
Let $V=G/H$ be a homogeneous space such that $G$ is simply connected and $H^\torf$ is a semi-direct product of $H^\f$ and $H^\tor$ (see notations in section \ref{section HS}). Assume that $\sln/H^\f$ satisfies BM-WA for some embedding of $H^\f$ in $\sln$. Then $V$ satisfies BM-WA too.
\end{pro}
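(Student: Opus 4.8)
The plan is to deduce Proposition \ref{prop david++} directly from Theorem \ref{MainThm} together with Harari's theorem, by tracking what the proof of Theorem \ref{MainThm} produces in this special case. Recall that the theorem associates to $V=G/H$ a finite group $F$, extension of $H^\f$ by an abelian group, such that BM-WA for $\sln/F$ implies BM-WA for $V$. So the whole point is to identify $F$ precisely enough to see that it is of the form $A\rtimes H^\f$ with $A$ finite abelian, and then invoke \cite[Thm. 1]{HarariBulletinSMF} with $G=H^\f$ and $E=F$.

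First I would run through the four steps of the proof of Theorem \ref{MainThm} under the extra hypotheses that $G$ is already simply connected (so Step 0 is vacuous and $G^\ssu=G$, $G^\tor=1$, which already makes Step 4 trivial — there is no toric part of the ambient group to remove) and that $H^\torf=H^\tor\rtimes H^\f$ is a semi-direct product. The key observation is that the semi-direct product hypothesis is preserved through Steps 1 and 2: in Step 1 we pass to $H_1=H^\torf$, which is by assumption $H^\tor\rtimes H^\f$; in Step 2 we set $H_2=R\rtimes H_1$ with $R=T^{r-1}\times P$ (here $T=H^\tor$), so $H_2=(R\times T)\rtimes H^\f = S\rtimes H^\f$, again a semi-direct product, where the $H^\f$-action on $S=T^r\times P$ is the diagonal natural one and $S$ is $H^\f$-isogenous to the $H^\f$-quasi-split torus $Q$. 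Then in Step 3 the finite group $H_3$ is generated by a finite subgroup $H_3'$ surjecting onto $H^\f$ and by the $H^\f$-invariant finite subgroup $A=\ker[S\xrightarrow{m}S\to Q]$; the content of this step is that $H_3$ is an extension of $H^\f$ by $A$. Under the semi-direct product hypothesis one can choose $H_3'$ to be a complement of $S$ in $H_2$ (i.e. a copy of $H^\f$ itself), so that $H_3 = A\rtimes H^\f$ is genuinely a semi-direct product, with $A$ finite abelian (it is the kernel of a morphism of tori, hence a finite group of multiplicative type, hence abelian).

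Next I would check that Step 4 collapses: since $G^\tor=1$ there is no factor $T'$ to eliminate, so $H_4=H_3=A\rtimes H^\f$ and $F=H_3$. Thus Theorem \ref{MainThm} in this case says: if $\sln/(A\rtimes H^\f)$ satisfies BM-WA for some embedding, then $V$ satisfies BM-WA. Now invoke Harari's result \cite[Thm. 1]{HarariBulletinSMF}: by hypothesis $\sln/H^\f$ satisfies BM-WA, and since $A\rtimes H^\f$ is a semi-direct product of a finite abelian group $A$ by the finite group $H^\f$, Harari's theorem gives that $\sln/(A\rtimes H^\f)$ satisfies BM-WA as well. Chaining the two implications yields BM-WA for $V$, as desired. (Here, as noted in the remark following Theorem \ref{MainThm}, the independence of BM-WA from the choice of embedding into $\sln$, via Lemma \ref{lemme stable bir}, lets us match up the embeddings used in the two theorems.)

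The main obstacle I anticipate is the bookkeeping in Step 3: one must verify that the semi-direct product structure of $H_2$ really does let one take $H_3$ to be a semi-direct product $A\rtimes H^\f$ rather than merely an extension. Concretely, $H_2=S\rtimes H^\f$ has $H^\f$ as an honest complementary subgroup; Proposition \ref{proposition existence} applied to $H_2$ (which equals its own "$\torf$" quotient) produces a finite $H_3'$ with $S\cap H_3' = S[m]$ and $H_3'\twoheadrightarrow H^\f$, but a priori $H_3'$ need not be the chosen complement $H^\f$. One should argue either that $H_3'$ can be adjusted to contain (a conjugate of) the complement $H^\f$, or more simply that it suffices to define $H_3$ directly as $A\cdot H^\f$ where $H^\f\subset H_2$ is the given complement — this is a finite group since $A\supseteq S[m] \supseteq$ (something controlling the $H^\f$-conjugates), and it is $A\rtimes H^\f$ because $A\cap H^\f \subseteq S\cap H^\f = 1$ and $A$ is normalized by $H^\f$ by $H^\f$-invariance of $A$. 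One then checks that this $H_3$ still intersects every connected component of $H_2$ (it surjects onto $H^\f=H_2/S$), so the rest of Step 3 goes through verbatim: the generic fibre of $G_3/H_3\to V_2$ is a torsor under the quasi-split twisted torus $\asd{}{X}{}{}{Q}$, hence trivial, hence $V_3$ is $k$-stably birational to $V_2$. Everything else is routine transport of the argument.
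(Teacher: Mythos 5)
Your proposal is correct and follows essentially the same route as the paper: it tracks the four steps of the proof of Theorem \ref{MainThm} under the simply connected and semi-direct product hypotheses, observes that the semi-direct structure survives Steps 1--2, that in Step 3 one can bypass Proposition \ref{proposition existence} (the paper's ``one can take $m=1$'') and take $H_3=A\rtimes H^\f$ with $A$ the kernel of the isogeny to the quasi-split torus, that Step 4 is vacuous since $G^\tor=1$, and then concludes with Harari's theorem. Nothing essential differs from the paper's argument.
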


Recall that $G$ is said to be simply connected if $\bar k[G]^*/\bar k^*=1$ and $\pic \bar G=0$. Equivalently, this amounts to $G^\torf=1$ and $G^\ss$ being simply connected.

\begin{proof}
It suffices to follow the proof of the Main Theorem in this particular case:
\begin{itemize}
 \item There is no use for step 0 since $G^\ss$ is already simply connected.
 \item Step 1 changes the stabilizer $H$ by $H_1=H^\torf$ and hence we still have a semi-direct product of $H^\f$ and a torus.
 \item In step 2, one constructs a bigger stabilizer $H_2$ which is a semi-direct product of $H_1$ and some torus, hence $H_2=H_2^\torf$ will still be a semi-direct product of  $H^\f$ and a torus.
 \item The finite group $H_3$ constructed in step 3 is easily seen to be the semi-direct product of $H^\f$ and the kernel of the isogeny from $H_2^\torf$ to some quasi-split torus (note that Proposition \ref{proposition existence} is not even needed, i.e. one can take $m=1$ here).
 \item Finally, there is no use for step 4 either since $G^\tor$ is trivial and hence $G_3$ is already isomorphic to $\sln$. Hence $F=H_3$.
\end{itemize}
By Harari's result cited above, the homogeneous space $\sln/F$ satisfies BM-WA if $\sln/H^\f$ does, which concludes the proof.
\end{proof}

In particular, applying the few cases where we know that BM-WA holds (cf. \cite[\S 1.4]{HarariBulletinSMF}, \cite[Thm. 1]{GLA AF PH res}), we get:

\begin{cor}
Let $V=G/H$ be a homogeneous space such that $G$ is simply connected and $H^\torf$ is a semi-direct product of $H^\f$ and $H^\tor$. Assume that one of the following holds:
\begin{enumerate}
\item $H^\f$ is obtained by successive semi-direct products of abelian groups;
\item $H^\f$ is split by an extension $L/k$ such that the order of $\mu(L)$ is prime to the order of $H^\f$ (in particular, the order of $H^\f$ is odd).
\end{enumerate} 
Then $V$ satisfies BM-WA.
\end{cor}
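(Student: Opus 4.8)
The plan is to obtain this as a direct consequence of Proposition~\ref{prop david++} together with the existing unconditional results for finite stabilizers. First I would observe that the hypotheses of the corollary are exactly those of that proposition: ``$G$ simply connected'' is the statement $G^\torf=1$ with $G^\ss$ simply connected, and $H^\torf$ is by assumption a semidirect product of $H^\f$ and $H^\tor$. So Proposition~\ref{prop david++} reduces the problem to proving that $\sln/H^\f$ satisfies BM-WA for some (hence, by Lemma~\ref{lemme stable bir}, any) embedding of the finite group $H^\f$ into $\sln$. It therefore suffices to check, separately, that hypothesis~1 and hypothesis~2 each guarantee BM-WA for $\sln/H^\f$.

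For hypothesis~1 I would argue by induction on the number of abelian layers in the iterated semidirect product description of $H^\f$. The base case is $H^\f=1$: then $\sln/H^\f=\sln$ is $k$-rational, hence has weak approximation, hence trivially satisfies BM-WA. For the inductive step, write $H^\f=A\rtimes G'$ with $A$ a finite abelian $k$-group (the outermost normal layer) and $G'$ again an iterated semidirect product of abelian groups, but with one layer fewer; by induction $\sln/G'$ satisfies BM-WA, so Harari's theorem (\cite[Thm.~1]{HarariBulletinSMF}, recalled at the start of this section) applies and gives BM-WA for $\sln/(A\rtimes G')=\sln/H^\f$. Here I would make sure that ``abelian'' is understood as ``abelian as a $k$-group'', i.e.\ with arbitrary Galois action, which is the generality of Harari's statement, so the induction runs without any extra hypothesis on the Galois action.

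For hypothesis~2 there is essentially nothing to do: I would simply invoke \cite[Thm.~1]{GLA AF PH res}, which states precisely that $\sln/F$ satisfies BM-WA whenever $F$ is a finite $k$-group split by an extension $L/k$ with $|\mu(L)|$ coprime to $|F|$. Applying this with $F=H^\f$, and noting that $-1\in\mu(L)$ forces $|H^\f|$ to be odd (the parenthetical remark), completes this case. Combining either case with Proposition~\ref{prop david++} then finishes the proof.

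I do not expect any genuine obstacle here: the corollary is a bookkeeping combination of Proposition~\ref{prop david++} with the two cited input theorems. The only points that require a moment of care are matching the group-theoretic hypotheses to those of Proposition~\ref{prop david++} and of \cite[Thm.~1]{HarariBulletinSMF}, and setting up the induction in case~1 so that at each stage it is the \emph{outermost} abelian normal factor that is stripped off, which is what produces a decomposition of the form $A\rtimes G'$ (abelian normal subgroup, arbitrary complement quotient) as demanded by Harari's statement.
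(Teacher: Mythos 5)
Your proposal is correct and follows essentially the same route as the paper: reduce via Proposition \ref{prop david++} to BM-WA for $\sln/H^\f$, then settle case 1 by Harari's result and case 2 by \cite[Thm.~1]{GLA AF PH res}. The only difference is cosmetic: where you spell out the induction stripping off the outermost abelian layer via \cite[Thm.~1]{HarariBulletinSMF}, the paper simply cites \cite[\S 1.4]{HarariBulletinSMF} (and notes that under condition 2 one even gets full weak approximation, which implies BM-WA).
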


Let us recall that condition 1 on $H^\f$ means that there exist a series of \emph{split} exact sequences
\[\xymatrix{
1 \ar[r] & A_i \ar[r] & H_i \ar[r] & H_{i-1} \ar[r] \ar@/_1pc/[l] & 1,
}\]
for $1\leq i\leq n$, where $H_n=H^\f$, $H_0=\{1\}$ and the $A_i$'s are abelian $k$-groups.

\begin{proof}
This is a direct consequence of Proposition \ref{prop david++} once one remarks that, for such an $H^\f$, we have BM-WA for $\sln/H^\f$ under condition 1 (cf. \cite[\S 1.4]{HarariBulletinSMF}) and weak approximation for $\sln/H^\f$ under condition 2 (cf. \cite[Thm. 1]{GLA AF PH res}).
\end{proof}

\end{document}